\newtheorem{theorem}{Theorem}
\newtheorem{lemma}{Lemma}
\begin{document}

% Page heads
\markboth{J.J. Brust, J.B. Erway, and R.F. Marcia}{Shape-Changing Trust-Region Methods Using Multipoint Symmetric Secant Matrices}

% Title portion
\title[Shape-Changing Trust-Region Methods Using Multipoint Symmetric Secant Matrices]{Shape-Changing Trust-Region Methods Using Multipoint Symmetric Secant Matrices}

\author[J.J. Brust]{Johannes J. Brust}
\email{jjbrust@ucsd.edu}
\address{Department of Mathematics, University of California, San
  Deigo, La Jolla, CA  92093}

\author[J.B. Erway]{Jennifer B. Erway}
\email{erwayjb@wfu.edu}
\address{Department of Mathematics, Wake Forest University, Winston-Salem, NC 2\
  7109}

\author[R.F. Marcia]{Roummel F. Marcia}
\email{rmarcia@ucmerced.edu}
\address{School of Natural Sciences, University of California, Merced, Merced,
  CA 95343}

\thanks{R.~F. Marcia is supported in part by National Science Foundation grant
IIS-1741490.}

\begin{abstract}
In this work, we consider methods for large-scale and nonconvex unconstrained optimization. We propose a new trust-region method whose 
subproblem is defined using a so-called ``shape-changing" norm together with densely-initialized multipoint symmetric secant ({\small MSS}) matrices to approximate the Hessian.  Shape-changing norms and dense 
initializations have been successfully used in the context
of traditional quasi-Newton methods, but have yet to be explored in the case
of {\small MSS} methods.  Numerical results suggest that
trust-region methods that use densely-initialized
{\small MSS} matrices together with shape-changing norms outperform {\small MSS} with other trust-region methods. 
\end{abstract}

\keywords{
Quasi-Newton methods;  large-scale optimization;  nonlinear optimization;  trust-region methods }
%% keywords here, in the form: keyword \sep keyword

\maketitle
\makeatletter
\newcommand{\defined}{\mathop{\,{\scriptstyle\stackrel{\triangle}{=}}}\,}

% Changing colors
%\newcommand{\rfm}[1]{\textcolor{cyan}{#1}}
%\newcommand{\obs}[1]{\textcolor{magenta}{#1}}
%\newcommand{\je}[1]{\textcolor{blue}{#1}}
%\newcommand{\jeo}[1]{\textcolor{violet}{#1}}
%\newcommand{\jb}[1]{\textcolor{red}{#1}}
%\newcommand{\jbo}[1]{\textcolor{violet}{#1}}
\newcommand{\rfm}[1]{\textcolor{black}{#1}}
\newcommand{\obs}[1]{\textcolor{black}{#1}}
\newcommand{\je}[1]{\textcolor{black}{#1}}
\newcommand{\jeo}[1]{\textcolor{black}{#1}}
\newcommand{\jb}[1]{\textcolor{black}{#1}}
\newcommand{\jbo}[1]{\textcolor{black}{#1}}

\newcommand{\bp}{\mathbf{p}}
\newcommand{\bv}{\mathbf{v}}
\newcommand{\bfm}[1]{\mathbf{#1}} % boldmathfont
\newcommand{\bk}[1]{\mathbf{#1}_k} % Bold with subscript k
\newcommand{\bko}[1]{\mathbf{#1}_{k+1}} % Bold with subscript k
\newcommand{\bsk}[1]{\boldsymbol{#1}_k} % Bold with subscript k
\newcommand{\bsko}[1]{\boldsymbol{#1}_{k+1}} % Bold with subscript k
\newcommand{\bs}[1]{\boldsymbol{#1}} % Bold with subscript k

\newcounter{pseudocode}[section]
\def\thepseudocode{\thesection.\arabic{pseudocode}}
\newenvironment{pseudocode}[2]%
        {%
        \refstepcounter{pseudocode}%
          \AlgBegin %
               {{\bfseries Algorithm \thepseudocode.}\rule[-1.25pt]{0pt}{10pt}#1}%
        #2}%
           {\AlgEnd}

\newcounter{Pseudocode}[section]
\def\thePseudocode{\thesection.\arabic{Pseudocode}}
\newenvironment{Pseudocode}[2]%
        {%
        \refstepcounter{Pseudocode}%
          \AlgBegin %
               {{\bfseries #1.}\rule[-1.25pt]{0pt}{10pt}}%
        #2}%
           {\AlgEnd}

\def\tnu{\tilde{\nu}}

% Definition of procedure
\newcounter{procedureC}%        counter for protocols
\newcounter{algorithm saved}% the real counter for algorithms
\newenvironment{procedureAlg}[1][H]{%
	%\floatname{algorithm}{PROCEDURE}
	\setcounter{algorithm saved}{\value{algocf}} % switch to using the protocol counter
    	\setcounter{algocf}{\value{procedureC}}
	\renewcommand{\algorithmcfname}{PROCEDURE}
	%\renewcommand{\thealgorithm}{\arabic{algorithm}}
	%\floatname{algorithm}{Procedure}
   % \renewcommand{\ALG@name}{PROCEDURE}% Update algorithm name
   \begin{algorithm}[#1]%
  }{\end{algorithm}
  \setcounter{procedureC}{\value{algocf}}
  \setcounter{algocf}{\value{algorithm saved}} % switch to using the protocol counter    	
  }

\makeatother

%% main text

\section{Introduction}
In this paper we propose a new solver for general large nonconvex problems of the following form: \begin{equation}\label{eqn-min}\min f(x),\end{equation}
where $x\in\Re^n$ and $f$ is continuously differentiable.
Generally speaking, solvers for unconstrained optimization  fall into two categories: Line search and trust-region methods.  While traditional line search methods that use local quadratic models require each of these models are convex, trust-region methods are able to approximate nonconvexity in the underlying function using nonconvex quadratic models. 
In this work, we focus on trust-region methods with possibly-indefinite Hessian approximations.  

Trust-region methods
 generate a sequence of iterates $\{x_k\}$ by
solving 
at each iteration a \emph{trust-region} subproblem:
\begin{equation}\label{eqn-trsp}
\min_{s\in\Re^n} Q(s)=g_k^Ts+\frac{1}{2}s^TB_ks
\quad \text{subject to } \quad \|s\|\le \delta,
\end{equation}
where $g_k=\nabla f(x_k)$ and $B_k\approx \nabla^2 f(x_k)$.  
Given $x_k$, the next iterate is then computed as $x_{k+1}=x_k+s_k$,
where $s_k$ is an  approximate solution to (\ref{eqn-trsp}).  
The choice of norm in (\ref{eqn-trsp}) affects the
difficulty to solve the trust-region
subproblem. In this manuscript, we consider two \emph{shape-changing}~\cite{BYuan02} norms to
define the trust region.  By using one of the norms, the trust-region subproblem decouples into two subproblems, each with closed-form solutions.  Meanwhile, the other shape-changing norm allows for a similar decoupling into two subproblems, one with a closed-form solution and the other that is a low-dimensional trust-region subproblem that is easily solved.

\medskip

In large-scale optimization, it can be the case that
the Hessian of the objective function is either too computationally expensive
to compute or store.  In these cases, 
first-order methods such as steepest descent and quasi-Newton methods may give the fastest convergence.
Multipoint symmetric secant ({\small MSS}) methods
can be thought of as generalizations of quasi-Newton methods in that they attempt to enforce multiple secant conditions at one time.
As with quasi-Newton methods,  {\small MSS} methods
generate a sequence of matrices $\{B_k\}$
to approximate the Hessian of $f$ at $x_k$ using
a sequence of low-rank updates.  Specifically, at
each iteration, $B_k$ is updated using a recursion relation where
$B_{k+1}=B_k+U_k$, and $U_k$ is a low-rank update (e.g., $\text{rank}(U_k)\le 2$).  However, these methods must
be user-initiated by selecting an initial $B_0$.
Conventionally, the initial matrix for quasi-Newton matrices is chosen to
be a scalar multiple of the identity, e.g., $B_0=\gamma_k I$, $\gamma_k\in\Re$.  This choice
leads to minimal storage requirements; only $\gamma_k$ 
must be stored in addition to the quasi-Newton pairs.
More recently, dense initializations have been proposed that
are also low-memory initializations in which only two constants must be stored in addition to the usual quasi-Newton pairs~\cite{denseInit,mss-erway}.  These dense
initializations implicitly decompose $\Re^n$ into
two orthogonal subspaces, assigning a parameter to
each subspace.  In this paper, we consider the same splitting of $\Re^n$ in a shape-changing trust-region setting. 

\medskip

In this manuscript, we propose a
densely-initialized {\small MSS} method together with
a shape-changing trust region to solve large nonconvex
problems of the form (\ref{eqn-min}).
The motivation for this
research comes from three different recent papers on first-order methods for solving general nonconvex unconstrained optimization problems where (1) a limited-memory symmetric rank-one 
({\small L-SR1}) quasi-Newton trust-region method with a shape-changing norm was found to be competitive with other standard trust-region methods~\cite{Oleg2021}, 
(2) a limited-memory
Broyden-Fletcher-Goldfab-Shanno
({\small L-BFGS}) method with a dense initialization together with a shape-changing trust-region method outperformed other {\small L-BFGS} trust-region methods~\cite{denseInit} and (3) a
{\small MSS} trust-region method that uses the Euclidean norm to define the trust region together with
a dense initialization outperformed both conventional initalizations and other standard quasi-Newton trust-region methods~\cite{mss-erway}. 

\medskip

This paper is organized in five sections.  Background on {\small MSS} matrices, shape-changing norms, and the dense initialization is given in Section 2.  In Section 3, we present the contributions of this research; namely, we propose  densely-initialized {\small MSS} trust-region methods that use the  shape-changing norms.  Numerical results on the {\small CUTE}st test set are presented in Section 4 comparing
the proposed method to other quasi-Newton methods.  Finally, concluding remarks are in Section 5.

\subsection{Notation and Glossary}
Throughout this paper, capital letters denote matrices and lower-case letter are reserved for vectors.
Moreover, vectors with an asterisk denote optimal
solutions.
The symbol $e_i$ denotes
the $i$th canonical basis vector whose dimension depends on context. Finally, ``sgn" denotes the signum function.

\subsection{Dedication}
We dedicate this paper to Oleg P. Burdakov.
The work presented here synthesizes
 three of Oleg's many areas of research: shape-changing norms~\cite{BYuan02,Oleg2021,burdakov2017efficiently}, dense initalizations of quasi-Newton methods~\cite{denseInit}, and {\small MSS} and secant  methods~\cite{burdakov1983methods,burdakov83stable,burdakov86,burdakov2002limited}.  
This manuscript is written in his memory.

\section{Background}\label{sec-back}
In this section, we review {\small MSS} matrices, including
a recursion relation and compact formulation,
and the shape-changing norm.

\subsection{MSS matrices}
{\small MSS} matrices are generated similarly to traditional quasi-Newton matrices: A sequence
of matrices $\{B_k\}$ is recursively formed using a sequence of
low-rank updates.  
Specifically, if $\{x_k\}$ is a sequence
of updates obtained to solve (\ref{eqn-min}), then define
$s_k=x_{k+1}-x_k$ and $y_k=\nabla f(x_{k+1})-f(x_k)$.
The pairs $\{(s_k,y_k)\}$ are often referred to as \emph{quasi-Newton} pairs.  In methods for large-scale optimization, \emph{limited-memory} versions of these methods are used that store only the most recently-computed $m$ pairs; $m$ is typically referred to as the ``memory'' of the method.  In practice, $m\ll n$, e.g., $m\in[3,7]$ (see, e.g., \cite{Representations}).  In this work, we assume a limited-memory framework where $m$ is small; further, we let $l$ denote the current number of stored pairs.

Let the matrices $S_k$ and $Y_k$ denote the matrices whose columns are formed by the stored quasi-Newton pairs:
\begin{equation}\label{eqn-SY}
S_k=[s_{k-1} \,\, s_{k-2} \,\, \ldots \,\, s_{k-l}]\in\Re^{n\times l}\quad\text{and} \quad
Y_k=[y_{k-1} \,\, y_{k-2} \,\, \ldots \,\, y_{k-l}]\in\Re^{n\times l},
\end{equation}
where $l\le m$. 
While quasi-Newton matrices must satisfy the so-called \emph{secant condition} $B_{k+1}s_k=y_k$,
{\small MSS} matrices seek to satisfy 
multiple secant conditions:  $B_kS_k=Y_k$.  Generally speaking,
it is impossible to satisfy the secant conditions and also require $B_k$ to be symmetric~\cite{schnabel1983quasi}.  (This can be seen by multiplying the multiple secant equations by $S_k^T$ on the left and noticing that while $S_k^TB_kS_k$ is symmetric whenever $B_k=B_k^T$, it is not generally true that $S_k^TY_k$ will be symmetric~\cite{schnabel1983quasi}.)

In~\cite{burdakov1983methods,burdakov2002limited,burdakov91}, Burdakov proposes relaxing the secant conditions by symmetrizing the product $S_k^TY_k$ using the following symmetrization:
$$\text{sym}(A)=\left\{
\begin{array}{l}
  A_{ij}, \,\, i\ge j \\
   A_{ji}, \,\, i<j. 
\end{array} \right. $$
With this symmetrization, $S_k^TB_kS_k=\text{sym}(S_k^TY_k)$ and yields the following  recursion relation for 
$B_k$:  \begin{equation}\label{eqn-rank2}
B_{k+1}=B_k+\frac{(y_k-B_ks_k)c_k^T+c_k(y_k-B_ks_k)^T}{s_k^Tc_k}
-\frac{(y_k-B_ks_k)^Ts_kc_kc_k^T}{(s_k^Tc_k)^2},\end{equation}
where $c_k\in\Re^n$ is any vector such that $c_k^Ts_i=0$ for all $0\le i <k$
and $c_k^Ts_k\ne 0$~\cite{burdakov1983methods,brust2018large}. Notice that this is a rank-two update;
in practice, this update can generate indefinite approximations to the Hessian.
More generally, the recursion relation (\ref{eqn-rank2}), without the orthogonality conditions on $c_k$, is not new in the
literature:  When $c_k=s_k$, the update is the Powell-symmetric-Broyden ({\small PSB}),
and $c_k=y_k$ yields the Davidon-Fletcher-Powell {\small DFP} update.

Given any initial $B_0$, the general compact formulation for {\small MSS} matrices is  $B_k=B_0+\Psi_kM_k\Psi_k^T$, where
\begin{equation}\label{eqn-cptB}
  \Psi_k\triangleq \begin{bmatrix} S_k & (Y_k-B_0S_k)\end{bmatrix} \quad \text{and}
  \quad
  M_k\triangleq \begin{bmatrix} W(S_k^TB_0S_k-(T_k+E_k+T_k^T))W & W \\ W & 0\end{bmatrix},
\end{equation}
where $W=(S_k^TS_k)^{-1}$, $T_k$ is the
strict upper triangular portion of $S_k^TY_k$, and $E_k$ is the diagonal of $S_k^TY_k$~\cite{brust2018large}.
The compact formulation requires that $S_k$ has full rank.  One way this can be accomplished is using a rank-revealing decomposition and then removing columns of $S_k$ that are linearly dependent.   In~\cite{mss-erway}, the $LDL^T$ decomposition of $S_k^TS_k$ is used to find linear dependence.  In this case, if a column of $S_k$ is removed then the corresponding column of $Y_k$ must also be removed in order
for $M_k$ to be well-defined.  See~\cite{mss-erway} for a full discussion.

\subsection{The spectral decomposition}\label{sec-spectral}
Consider the compact
formulation of $B_k$, with
(\ref{eqn-cptB}):
\begin{equation*}
B_k=B_0+\Psi_kM_k\Psi_k^T,
\end{equation*}
where $\Psi_k\in\Re^{n\times 2l}$, $M_k\in\Re^{2l\times 2l}$, and $l$ is the number of stored quasi-Newton
pairs.  Then, given the so-called ``thin" {\small QR} factorization of $\Psi_k$, namely $\Psi_k=QR$, 
we obtain the following expression for $B_k$:
\begin{equation}\label{eqn-spectral}
B_k=B_0+QRM_kR^TQ^T,
\end{equation}
where $Q\in\Re^{n\times n}, R\in\Re^{l\times l}$, and $RM_kR^T\in\Re^{2l\times 2l}$.  Because of
its small size, the spectral decomposition of 
$RM_kR^T$ is computable.  Suppose $U\hat{\Lambda}U^T$
is the spectral decomposition of $RM_kR^T$ with
$\hat{\Lambda}=\text{diag}\{\hat{\lambda}_1,\ldots,\hat{\lambda}_{2l}\}$, and
$B_0=\gamma_k I$ is the conventional one-parameter
initialization.
Then, $B_k$ can be written as
$B_k=P\Lambda P^T$, where
\begin{equation*}P= \begin{bmatrix} QU & (QU)^\perp\end{bmatrix} \quad \text{and} \quad
\Lambda = \begin{bmatrix} \hat{\Lambda}+\gamma_k I & 0 \\ 0 & \gamma_kI \end{bmatrix}.
\end{equation*}
While the above derivation is found in~\cite{denseInit,Oleg2021,burdakov2017efficiently,simaxErway,brust2017solving}, in practice, 
the factorization may also be accomplished using the $LDL^T$ factorization~\cite{burdakov2017efficiently,denseInit,Oleg2021,mss-erway}.  (For
more details on this spectral decomposition of $B_k$
see~\cite{burdakov2017efficiently,denseInit,Oleg2021,mss-erway}.)  For simplicity, we define
\begin{equation}\label{eqn-P}
P_\parallel=QU \quad \text{and} \quad P_\perp=(QU)^\perp,
\end{equation}
and make use of these definitions throughout the duration of the manuscript.

\medskip

In order to solve the trust-region subproblem at each iteration, it will be necessary to be able to implicitly perform matrix-vector products with $P_\parallel$.  It is possible to form $P_\parallel$ without storing $Q$.  To see this, note that 
\begin{equation}\label{eqn-Parallel}
P_\parallel=QU=\Psi_kR^{-1}U.
\end{equation} In order for $R$ to be invertible, $\Psi_k$ must have full rank. Similar to~\cite{burdakov2017efficiently,Oleg2021,mss-erway,denseInit}, we propose using the $LDL^T$ decomposition to identify columns of $\Psi_k$ that are linearly dependent.  For a full discussion on ensuring both $S_k$ and $\Psi_k$ have linearly independent columns, see~\cite{mss-erway}.

\subsection{Shape-changing norms}
Trust-region subproblems have the form of (\ref{eqn-trsp}),
 but any norm may be used in the constraint that defines the trust region.  The most commonly-chosen norm is the Euclidean norm; other popular choices found in the literature are the one-norm and infinity-norm.  One important advantage in using the Euclidean norm to define the trust region is that there are optimality conditions that characterize a global solution~\cite{Gay81,more1983computing}:
\begin{theorem}\label{thrm-GMS} The vector
$s^*\in\Re^n$ is a global
solution of
\begin{equation}\label{eqn-trsp2}
\min_{s\in\Re^n} Q(s)=g_k^Ts+\frac{1}{2}s^TBs
\quad \text{subject to } \quad \|s\|_2\le \delta,
\end{equation}
if and only if $\|s^*\|_2\le \delta$
and there exists a unique $\sigma^*\ge 0$ such that $B+\sigma^* I$ is
positive semidefinite and
\begin{equation}\label{eqn-opt}
  (B+\sigma^*I)s^*=-g \quad \text{and} \quad
\sigma^*(\delta-\|s^*\|_2)=0.
\end{equation}
\end{theorem}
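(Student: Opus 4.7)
\medskip

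\noindent \textbf{Proof plan.} This is the classical Gay--Moré--Sorensen characterization, so my plan is to prove the two implications separately, starting with the easier sufficiency direction.

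For sufficiency, I would assume that there exists $\sigma^\ast \ge 0$ satisfying the stated conditions and show directly that $Q(s) \ge Q(s^\ast)$ for every $s$ with $\|s\|_2 \le \delta$. The key is the algebraic identity obtained by writing $Q(s)-Q(s^\ast) = g^T(s-s^\ast) + \tfrac{1}{2}(s^TBs - (s^\ast)^TBs^\ast)$ and substituting $g = -(B+\sigma^\ast I)s^\ast$ from the stationarity condition. After collecting terms, this rearranges to
\begin{equation*}
Q(s) - Q(s^\ast) \;=\; \tfrac{1}{2}(s-s^\ast)^T(B+\sigma^\ast I)(s-s^\ast) \;+\; \tfrac{\sigma^\ast}{2}\bigl(\|s^\ast\|_2^2 - \|s\|_2^2\bigr).
\end{equation*}
The first term is nonnegative because $B+\sigma^\ast I$ is positive semidefinite. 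The complementarity condition $\sigma^\ast(\delta - \|s^\ast\|_2)=0$ splits the second term into two cases: either $\sigma^\ast = 0$, in which case the term vanishes, or $\|s^\ast\|_2 = \delta$, in which case feasibility $\|s\|_2 \le \delta$ makes the term nonnegative. Either way, $s^\ast$ is a global minimizer.

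For necessity, I would let $s^\ast$ be a global minimizer and invoke the KKT conditions applied to (\ref{eqn-trsp2}), which yield a multiplier $\sigma^\ast \ge 0$ satisfying $(B+\sigma^\ast I)s^\ast = -g$ together with complementarity $\sigma^\ast(\|s^\ast\|_2 - \delta) = 0$. The main work is establishing that $B + \sigma^\ast I$ is positive semidefinite. If $\|s^\ast\|_2 < \delta$, then $\sigma^\ast = 0$ by complementarity and $B$ must be positive semidefinite by the second-order necessary condition for an interior minimizer. If $\|s^\ast\|_2 = \delta$, I would reverse the sufficiency identity: for any $s$ with $\|s\|_2 = \|s^\ast\|_2$, global optimality gives $0 \le Q(s) - Q(s^\ast) = \tfrac{1}{2}(s-s^\ast)^T(B+\sigma^\ast I)(s-s^\ast)$, and by varying $s$ over the sphere I can exhibit directions $s - s^\ast$ spanning any hyperplane through the origin, which forces $B+\sigma^\ast I \succeq 0$. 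The boundary case is the delicate step, especially in the so-called hard case where $-g$ lies in the range of $B+\sigma^\ast I$ with $\sigma^\ast$ equal to $-\lambda_{\min}(B)$; there one constructs the appropriate perturbation along a null vector of $B+\sigma^\ast I$ to complete the argument.

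Uniqueness of $\sigma^\ast$ follows from a monotonicity argument: if two different multipliers $\sigma_1^\ast < \sigma_2^\ast$ both satisfied the conditions, then the corresponding solutions of $(B+\sigma I)s = -g$ have strictly decreasing $\|s\|_2$ as $\sigma$ increases (on the range where $B+\sigma I \succ 0$), which together with complementarity forces a contradiction unless the two collapse to the same value. The main obstacle in the proof is the necessity of positive semidefiniteness of $B + \sigma^\ast I$ in the boundary case, particularly when $B+\sigma^\ast I$ is singular, since this requires an argument beyond standard second-order necessary conditions.
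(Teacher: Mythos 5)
The paper offers no proof of Theorem~\ref{thrm-GMS}: it is quoted as a known result with citations to Gay and to Mor\'e--Sorensen, so there is no in-paper argument to compare yours against. Your proposal is essentially a faithful reconstruction of the standard proof from those references, and the part you carry out in full is correct: substituting $g=-(B+\sigma^*I)s^*$ does yield
\[
Q(s)-Q(s^*)=\tfrac12\,(s-s^*)^T(B+\sigma^*I)(s-s^*)+\tfrac{\sigma^*}{2}\bigl(\|s^*\|_2^2-\|s\|_2^2\bigr),
\]
and the two-case use of complementarity gives sufficiency cleanly. Two refinements on the sketched parts. First, in the boundary case of necessity, the directions $s-s^*$ reachable by varying $s$ over the sphere $\|s\|_2=\delta$ are, up to positive scaling, exactly the vectors $z$ with $z^Ts^*\neq 0$ (take $s=s^*-2\frac{z^Ts^*}{\|z\|_2^2}\,z$); directions orthogonal to $s^*$ are not attainable this way, so the correct closing step is density of $\{z:z^Ts^*\neq 0\}$ together with continuity of $z\mapsto z^T(B+\sigma^*I)z$, rather than ``spanning any hyperplane through the origin.'' Second, uniqueness of $\sigma^*$ for the given global solution $s^*$ has a one-line argument that avoids your monotonicity discussion (which concerns how solutions of $(B+\sigma I)s=-g$ vary with $\sigma$ and is really an algorithmic fact): if $\sigma_1\neq\sigma_2$ both satisfied $(B+\sigma_iI)s^*=-g$, subtracting gives $(\sigma_1-\sigma_2)s^*=0$, hence $s^*=0$, and then complementarity $\sigma_i(\delta-0)=0$ with $\delta>0$ forces $\sigma_1=\sigma_2=0$, a contradiction. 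With those two adjustments your outline is a complete and standard proof.
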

So-called ``exact" subproblems solvers
aim to explicitly find a pair $(s^*,\sigma^*)$
that satisfy (\ref{eqn-opt}) in order to solve each subproblem to high accuracy~\cite{more1983computing,brust2017solving,mss-algo,burdakov2017efficiently}.  While convergence proofs of the overall trust-region method only require an approximate solution
of each subproblem~\cite{powell40,powell41,powell42},  these ``exact" solvers bet on that solving each subproblem to high precision will lead to fewer overall iterations of the trust-region method.

\medskip

The ``shape-changing" norms were first defined by Burdakov and Yuan~\cite{BYuan02}.
These shape-changing norms make use of the matrix
of eigenvectors $P_\parallel$ and $P_\perp$ (see \ref{eqn-P}), and thus, the size and shape of the
trust-region changes every iteration:
\begin{eqnarray}
\|s\|_{P,\infty} & = &  \max\left(\|P_\parallel^Ts\|_\infty,
\|P_\perp^Ts\|_2\right) \label{eqn-shape1}\\
\|s\|_{P,2} & = &  \max\left(\|P_\parallel^Ts\|_2,
\|P_\perp^Ts\|_2\right). \label{eqn-shape2}
\end{eqnarray}
For simplicity, we refer to (\ref{eqn-shape1}) as the $(P,\infty)$ norm,
and (\ref{eqn-shape2}) as the $(P,2)$ norm.

In~\cite{burdakov2017efficiently}, Burdakov et al.
show that these norms are equivalent to the Euclidean
norm and the equivalence factors are independent of $P$.
Importantly, these norms allow each subproblem to be
decomposed into two small subproblems, each of which
are either easy to solve or have a closed-form solution.
These norms have been successfully used in  {\small L-BFGS} and {\small L-SR1} trust-region settings~\cite{burdakov2017efficiently, Oleg2021}.  For more details on these norms, see~\cite{BYuan02,burdakov2017efficiently}.
Implementation details for solving  trust-region subproblems defined using the shape-changing norm are presented in Section~\ref{sec-trsp-shape}.

\subsection{The dense initialization}
The conventional initialization for a quasi-Newton method is a constant diagonal initialization, i.e.,
$B_0=\gamma_k I$, $\gamma_k\in\Re$.  This initialization performs well in practice and enjoys ease of use with no additional memory requirements other than storing a scalar--for these reasons it is the most popular initialization.  Other low-memory initializations include nonconstant diagonal matrices.
Until the dense initalization was first proposed for {\small L-BFGS} matrices, low-memory
intializations were limited to diagonal matrices.

\medskip

The dense initialization exploits the partitioning
of $\Re^n$ into two subspaces: (i) the eigenspace
associated with the eigenvalues $$\hat{\lambda}_1+\gamma_k ,\ldots,\hat{\lambda}_{2l}+\gamma_k,$$
and (ii) the eigenspace associated 
eigenvalue $\gamma_k$, assuming that $RM_kR^T$ is nonsingular.
Specifically, note that
$$B_0=\gamma_kI =
\gamma_k PP^T = 
\gamma_k P_\parallel P_\parallel^T + \gamma_k P_\perp P_\perp^T.$$
In lieu of using one parameter for both spaces,
the dense initalization uses two:
\begin{equation}\label{eqn-B0}
\tilde{B}_0=\zeta_k P_\parallel P_\parallel^T + \zeta^C_k P_\perp P_\perp^T,
\end{equation}
where $\zeta_k,\zeta_k^C\in\Re$.  For the duration
of the paper, $\tilde{B}_0$ will denote a dense initial matrix.  Using the dense initialization, the compact formulation becomes
$B_k=P\Lambda P^T$, where
\begin{equation}
\label{eqn-dense-spectral}
P= \begin{bmatrix} QU & (QU)^\perp\end{bmatrix} \quad \text{and} \quad
\Lambda = \begin{bmatrix} \hat{\Lambda}+\zeta_k I & 0 \\ 0 & \zeta_k^CI \end{bmatrix}.
\end{equation}

\section{Implementation}\label{sec-implement}
In this section, we demonstrate how the dense initialization for an {\small MSS} method can be used in a shape-changing norm.  This section presents the contributions of this research.

\subsection{A second compact formulation}
\label{subsec-compact2}
The spectral decomposition $B_k=P\Lambda P^T$ where
$P$ and $\Lambda$ are given by (\ref{eqn-dense-spectral}) relies only on the
existence of a compact formulation for $B_k$.
In this subsection, we derive an alternative 
compact formulation for $B_k$ in the case
of a dense initialization that is compatible
with the derivation of the spectral decomposition
in Section~\ref{sec-spectral}.

\medskip

Consider the compact formulation for the
dense initialization: $$B_k=\tilde{B}_0+
\Psi_kM_k\Psi_k^T,$$
where $\Psi_k$ and $M_k$ are given in (\ref{eqn-cptB}).
The following lemma appears in~\cite{mss-erway}
and allows us to define an alternative compact
formulation in Theorem \ref{thrm-cpt2}:
\begin{lemma}\label{cor1}
     Suppose $\tilde{B}_0$ is as in (\ref{eqn-B0}).  Then, $\tilde{B}_0S_k=\zeta_k S_k$.
\end{lemma}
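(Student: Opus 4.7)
The plan is to show that the columns of $S_k$ lie in the subspace spanned by $P_\parallel$, so that the two projectors $P_\parallel P_\parallel^T$ and $P_\perp P_\perp^T$ act on $S_k$ as the identity and zero, respectively. The identity $\tilde{B}_0 S_k = \zeta_k S_k$ then falls out of the definition (\ref{eqn-B0}).

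First I would record the structural fact that $P_\parallel P_\parallel^T$ is the orthogonal projector onto $\mathrm{range}(P_\parallel)$ and $P_\perp P_\perp^T$ is the orthogonal projector onto its orthogonal complement, since by construction $[P_\parallel \ P_\perp] = P$ is an orthogonal matrix. Next I would establish that $\mathrm{range}(P_\parallel) = \mathrm{range}(\Psi_k)$. This uses (\ref{eqn-Parallel}), namely $P_\parallel = \Psi_k R^{-1} U$: because $\Psi_k$ is assumed to have full column rank (ensured by the $LDL^T$-based pruning described at the end of Section~\ref{sec-spectral}), $R$ is invertible, and $U$ is orthogonal as the eigenvector matrix of the symmetric matrix $R M_k R^T$, so $R^{-1}U$ is invertible and the two column spaces coincide.

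Since $\Psi_k = [S_k \ \ (Y_k - B_0 S_k)]$, the columns of $S_k$ are trivially in $\mathrm{range}(\Psi_k) = \mathrm{range}(P_\parallel)$. Consequently $P_\parallel P_\parallel^T S_k = S_k$ and $P_\perp P_\perp^T S_k = 0$. Substituting into (\ref{eqn-B0}) yields
\[
\tilde{B}_0 S_k \;=\; \zeta_k P_\parallel P_\parallel^T S_k + \zeta_k^C P_\perp P_\perp^T S_k \;=\; \zeta_k S_k,
\]
which is the claim.

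There is no substantive obstacle here; the argument is essentially bookkeeping about projectors and column spaces. The only subtle point worth being explicit about is the invertibility of $R$ (and hence the equality $\mathrm{range}(P_\parallel) = \mathrm{range}(\Psi_k)$), which depends on the full-rank assumption on $\Psi_k$ enforced by the rank-revealing decomposition referenced earlier. Once that is in place, the rest is a one-line projection calculation.
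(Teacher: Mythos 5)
Your proof is correct and takes the standard route: the columns of $S_k$ are trivially in $\mathrm{range}(\Psi_k)=\mathrm{range}(P_\parallel)$ (the equality resting on the invertibility of $R$ and the orthogonality of $U$, both of which you correctly isolate), so $P_\parallel P_\parallel^T S_k = S_k$ and $P_\perp P_\perp^T S_k = 0$, and the claim follows from (\ref{eqn-B0}). The paper itself offers no argument here --- it simply defers to \cite[Corollary 3.4]{mss-erway} --- so your write-up is a self-contained version of essentially the same projector bookkeeping.
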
\label{cor-S}
\begin{proof}
See~\cite[Corollary 3.4]{mss-erway}.
\end{proof}

\begin{theorem}\label{thrm-cpt2}
Suppose $\tilde{B}_0$ is as in (\ref{eqn-B0}) and
$S_k$ is full rank,
then $B_k$ can be written as 
$$B_k=\tilde{B}_0+
\tilde{\Psi}_k\tilde{M}_k\tilde{\Psi}_k^T,$$
where
\begin{equation}\label{eqn-hats}
\tilde{\Psi}_k= \begin{bmatrix} S_k & Y_k\end{bmatrix}
\quad \text{and}\quad
\tilde{M}_k=
   \begin{bmatrix} -\zeta_k W -W(T_k+E_k+T_k^T)W & W \\ W & 0\end{bmatrix},
\end{equation}
and $W=(S_k^TS_k)^{-1}$, 
$T_k$ is the
strict upper triangular portion of $S_k^TY_k$, and $E_k$ is the diagonal of $S_k^TY_k$.
\end{theorem}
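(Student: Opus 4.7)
The plan is to reduce the new compact formulation to the general one from (\ref{eqn-cptB}) by an invertible change of basis on the block columns of $\Psi_k$, then simplify the middle matrix using Lemma~\ref{cor1}. First, I would apply the general formula (\ref{eqn-cptB}) with $B_0$ replaced by the dense initialization $\tilde{B}_0$, giving $B_k=\tilde{B}_0+\Psi_kM_k\Psi_k^T$, where $\Psi_k=[\,S_k\;\;Y_k-\tilde{B}_0S_k\,]$ and the middle block matrix involves $S_k^T\tilde{B}_0S_k$. Then I would invoke Lemma~\ref{cor1} to obtain the two simplifications $\tilde{B}_0S_k=\zeta_k S_k$ and $S_k^T\tilde{B}_0S_k=\zeta_k S_k^TS_k=\zeta_k W^{-1}$, which allow us to rewrite the outer factor as $\Psi_k=[\,S_k\;\;Y_k-\zeta_k S_k\,]$ and the middle $(1,1)$-block as $W\bigl(\zeta_kW^{-1}-(T_k+E_k+T_k^T)\bigr)W=\zeta_kW-W(T_k+E_k+T_k^T)W$.

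Next, I would express the column-reduction as a right-multiplication by a block upper triangular matrix. Define
\[
J=\begin{bmatrix} I & -\zeta_k I \\ 0 & I \end{bmatrix}\in\Re^{2l\times 2l},
\]
so that $\Psi_k=\tilde{\Psi}_kJ$ with $\tilde{\Psi}_k=[\,S_k\;\;Y_k\,]$. Substituting into the compact formula yields
\[
B_k=\tilde{B}_0+\tilde{\Psi}_k\,(JM_kJ^T)\,\tilde{\Psi}_k^T,
\]
and the theorem will follow by verifying that $JM_kJ^T=\tilde{M}_k$.

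The remaining step is a direct block computation. Writing $A=\zeta_kW-W(T_k+E_k+T_k^T)W$, one forms $JM_k=\bigl[\begin{smallmatrix} A-\zeta_kW & W \\ W & 0\end{smallmatrix}\bigr]$ and then $JM_kJ^T=\bigl[\begin{smallmatrix} A-2\zeta_kW & W \\ W & 0\end{smallmatrix}\bigr]$. Since $A-2\zeta_kW=-\zeta_kW-W(T_k+E_k+T_k^T)W$, this matches $\tilde{M}_k$ as defined in (\ref{eqn-hats}).

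The conceptual content is essentially a change of basis plus Lemma~\ref{cor1}; the main care needed is the block bookkeeping, particularly keeping track of signs when multiplying by $J$ and $J^T$, and the use of full rank of $S_k$ only to guarantee $W=(S_k^TS_k)^{-1}$ exists. Since $J$ is unit upper triangular and hence invertible, no further rank assumptions are introduced, so the hypotheses of the theorem suffice.
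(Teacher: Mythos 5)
Your proof is correct and follows essentially the same route as the paper: start from the general compact formulation with the dense initialization, apply Lemma~\ref{cor1} to replace $\tilde{B}_0 S_k$ by $\zeta_k S_k$, and then verify algebraically that the resulting $\Psi_k M_k \Psi_k^T$ regroups around $\tilde{\Psi}_k=[\,S_k\;\;Y_k\,]$. The only difference is organizational---you package the final step as a congruence $JM_kJ^T$ with a unit upper triangular $J$ satisfying $\Psi_k=\tilde{\Psi}_kJ$, whereas the paper expands the outer products directly---and your block computation ($A-2\zeta_kW=-\zeta_kW-W(T_k+E_k+T_k^T)W$) checks out.
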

\begin{proof}
Consider the compact formulation where 
$\Psi_k$ and $M_k$ are given
by (\ref{eqn-cptB}) together 
with the dense initialization:
$$B_k=\tilde{B}_0+\Psi_k M_k \Psi_k^T,$$
where
\begin{equation*}
  \Psi_k= \begin{bmatrix} S_k & (Y_k-\tilde{B}_0S_k)\end{bmatrix} \quad \text{and}
  \quad
  M_k= \begin{bmatrix} W(S_k^T\tilde{B}_0S_k-(T_k+E_k+T_k^T))W & W \\ W & 0\end{bmatrix}.
\end{equation*}

\noindent
By Lemma~\ref{cor1},  $\Psi_k=[S_k \,\,\, (Y_k-\zeta_kS_k)]$ and $M_k$ can be simplified as follows:

\begin{eqnarray*}
 M_k 
   & = &  \begin{bmatrix} W(S_k^T\tilde{B}_0S_k-(T_k+E_k+T_k^T))W & W \\ W & 0\end{bmatrix}
  \\
    %&=&
   %\begin{bmatrix} W(\zeta_k S_k^TS_k-(T_k+E_k+T_k^T))W & %W \\ W & 0\end{bmatrix}
  % \\
  &=&
   \begin{bmatrix} \zeta_k W -W(T_k+E_k+T_k^T)W & W \\ W & 0\end{bmatrix}.
 \end{eqnarray*}
Putting this together yields that $\Psi_kM_k\Psi_k^T$
can be written as
\begin{eqnarray*}
\Psi_kM_k\Psi_k^T
 &=& \begin{bmatrix} S_k & Y_k-\zeta_k S_k\end{bmatrix}
   \begin{bmatrix} \zeta_k W -W(T_k+E_k+T_k^T)W & W \\ W & 0\end{bmatrix}
  \begin{bmatrix}
  S_k^T \\
  (Y_k - \zeta_k S_k)^T 
  \end{bmatrix}
  \\
  & = & 
  -\zeta_k S_kWS_k^T - S_kW(T_k+E_k+T_k^T)WS_k^T 
  + S_kWY_k^T + Y_kWS_k^T   \\[.2cm]
    &=&
   \begin{bmatrix} S_k & Y_k\end{bmatrix}
   \begin{bmatrix} -\zeta_k W -W(T_k+E_k+T_k^T)W & W \\ W & 0\end{bmatrix}
  \begin{bmatrix}
  S_k^T \\
  Y_k^T 
  \end{bmatrix}.
\end{eqnarray*}
Thus, $B_k=\tilde{B}_0+\hat{\Psi}_k\hat{M}_k\hat{\Psi}_k^T$,
where $\tilde{\Psi}_k$ and $\tilde{M}_k$ are given by
(\ref{eqn-hats}), respectively.
\end{proof}
It is the case that the compact formulation given in Theorem~\ref{thrm-cpt2} is the compact
formulation Burdakov in~\cite{burdakov2002limited} derived for the case of the conventional initialization $B_0=\gamma_k I$, $\gamma_k\in I$; however, his method of derivation required a single-parameter initialization.

\medskip

There are several benefits of using the compact formulation given in Theorem~\ref{thrm-cpt2} over 
the compact formulation defined by (\ref{eqn-cptB}).  Namely,
$\Psi_k$ can be formed without
any computations; whereas (\ref{eqn-cptB}) requires
scalar multiplication with $\zeta_k$ and $n$ subtractions.
Moreover, more importantly,
whenever a new $\zeta_k$ is computed $\Psi_k$
in (\ref{eqn-cptB}) must
be recomputed, possibly from scratch; however, 
in (\ref{eqn-hats}), $\zeta_k$ is not needed
to form $\tilde{\Psi}_k$.  It is for these reasons
that the proposed method uses this second alternative compact formulation.

We note further that $B_k$ in \eqref{eqn-hats} can also be represented by  
\begin{equation*}
\widehat{\Psi}_k= \begin{bmatrix} S_k W & Y_k\end{bmatrix}
\quad \text{and}\quad
\widehat{M}_k=
   \begin{bmatrix} -\zeta_k W^{-1} -(T_k+E_k+T_k^T) & I \\ I & 0\end{bmatrix}.
\end{equation*} Since $ W^{-1} = S_k^T S_k $,  $ \widehat{M}_k $ can be formed without inverting a small matrix. In order to keep computational cost low, it is not necessary to form the product $ S_k W $. Instead, when a product with an arbitrary vector $p$ and $ \widehat{\Psi}_k $ is needed, one can compute this using only matrix-vector products as follows: 
$ \widehat{\Psi}_k^T p = \begin{bmatrix} W (S_k^T p) \\ Y_k^T p \end{bmatrix} $. We make this representation available as an additional option for an {L-MSS} method; however, our numerical experiments favored the results with \eqref{eqn-hats}.  For this reason, we assume the representation~\eqref{eqn-hats} for the duration of the paper.

%Todo: Discuss W in the psi matrix.

\subsection{Solving the trust-region subproblem}
\label{sec-trsp-shape}
In this section, we demonstrate how to solve the trust-region subproblem defined by a shape-changing
norm, where an {\small MSS} matrix is used to approximate the Hessian at each iterate.  We generally follow the presentations in~\cite{burdakov2017efficiently} and~\cite{Oleg2021}, altering the presentation to allow for a dense initialization.

\subsubsection{The $(P,\infty)$ shape-changing norm}\label{sec-Pinfty}
In this section, we consider a trust-region subproblem whose constraint is defined by the $(P,\infty)$ norm:
\begin{equation}\label{eqn-shape-tr1}
\min_{s\in\Re^n} Q(s)=g_k^Ts+\frac{1}{2}s^TB_ks
\quad \text{subject to } \quad \|s\|_{P,\infty}\le \delta,
\end{equation}

\medskip
Consider $v=P^Ts$ where $P=[P_\parallel \,\, P_\perp]$ as in Section~\ref{sec-spectral}.  Applying this change of variables by substituting in $Pv$ for $s$ in (\ref{eqn-shape-tr1}) yields the following quadratic function:
\begin{equation}\label{eqn-Pv}
Q(Pv)=g_k^T(Pv)+\frac{1}{2}(Pv)^TB_k(Pv).
\end{equation}
Letting $$v_\parallel=P_\parallel^T s, \quad v_\perp = P_\perp^Ts, \quad g_\parallel=P_\parallel^Tg, \quad g_\perp=P_\perp^Tg,$$ then (\ref{eqn-Pv}) simplifies
as follows:
\begin{eqnarray}\label{eqn-cov1}
Q(Pv)& = & g_k^T(Pv)+\frac{1}{2}v^T\Lambda v \nonumber \\
  & = & g_\parallel^Tv_\parallel+g_\perp^Tv_\perp + \frac{1}{2}\left(v_\parallel^T\left(\hat{\Lambda}+\zeta_kI_{2l}\right)v_\parallel +\zeta^C_k \|v_\perp\|_2^2\right).
\end{eqnarray}

Notice that (\ref{eqn-cov1}) is
separable, and thus,(\ref{eqn-shape-tr1}) can be decoupled into two trust-region subproblems:
\begin{eqnarray}
\min_{v_\parallel\in\Re^{2l}} q_\parallel(v_\parallel)=g_\parallel^Tv_\parallel+\frac{1}{2}\left(v_\parallel^T\left(\hat{\Lambda}+\zeta_kI_{2l}\right)v_\parallel\right)\quad \text{subject to } \quad \|v_\parallel\|_{\infty}\le \delta, \label{eqn-shape-tr1-sep}\\
\min_{v_\perp\in\Re^{n-2l}}q_\perp(v_\perp)=g_\perp^Tv_\perp+\frac{1}{2}\zeta^C_k\|v_\perp\|_2^2
\quad \text{subject to } \quad \|v_\perp\|_{2}\le \delta.
\label{eqn-shape-tr2-sep}
\end{eqnarray}
Both of these subproblems have closed form solutions.
In particular,
the  closed-form solution to (\ref{eqn-shape-tr1-sep}) is given by~\cite{burdakov2017efficiently,Oleg2021}:
%\begin{eqnarray}\label{eqn-closed1}
%\left[ v_\parallel^*\right]_i=\left\{
%\begin{array}{ll}
%-\frac{[g_\parallel]_i}{\lambda_i} & \text{if }
%\left| \frac{[g_\parallel]_i}{\lambda_i} \right|\le
%\delta_k \text{ and } \lambda_i>0,\nonumber\\
%c & \text{if } [g_\parallel]_i=0 \text{ and }
%\lambda_i=0,\nonumber\\
%-\text{sgn}\left( [g_\parallel]_i\right)\delta_k &
% \text{if } [g_\parallel]_i \ne 0 \text{ and }
%\lambda_i=0, \label{eqn-closed1}\\
%\pm \delta_k & 
% \text{if } [g_\parallel]_i = 0 \text{ and }
%\lambda_i<0, \nonumber\\
%-\frac{\delta_k}{\left| [g_\parallel]_i \right|} [g_\parallel]_i & \text{otherwise,}\nonumber\end{array}\right.
%\end{eqnarray}
\begin{equation*}%\label{eqn-closed1}
\left[ v_\parallel^*\right]_i=
\begin{cases}
-\frac{[g_\parallel]_i}{\lambda_i} & \text{if }
\left| \frac{[g_\parallel]_i}{\lambda_i} \right|\le
\delta_k \text{ and } \lambda_i>0, \\
c & \text{if } [g_\parallel]_i=0 \text{ and }
\lambda_i=0, \\
-\text{sgn}\left( [g_\parallel]_i\right)\delta_k &
 \text{if } [g_\parallel]_i \ne 0 \text{ and }
\lambda_i=0, \\
\pm \delta_k & 
 \text{if } [g_\parallel]_i = 0 \text{ and }
\lambda_i<0, \\
-\frac{\delta_k}{\left| [g_\parallel]_i \right|} [g_\parallel]_i & \text{otherwise,}
\end{cases}
\end{equation*}
where $\lambda_i=\hat{\lambda}_i+\zeta_k$ for $i=1,\ldots,2l$ and $c\in[-\delta_k,\delta_k]$.
The closed form solution to (\ref{eqn-shape-tr2-sep}) 
is 
$$v_\perp^* = \left\{\begin{array}{ll}
-\frac{1}{\zeta_k^C}g_\perp & \text{if } \zeta_k^C>0
\text{ and } \|g_\perp\|_2\le \delta_k |\zeta_k^C|\\
\delta_k u & \text{if }
\zeta_k^C\le 0 \text{ and } 
\|g_\perp\|_2=0\\
-\frac{\delta_k}{\|g_\perp\|_2}g_\perp & \text{otherwise,}\end{array}
\right.$$
where $u\in\Re^{n-2l}$ is a
unit vector with respect to the
two-norm~\cite{burdakov2017efficiently,Oleg2021}.
Notice that $\|v_\perp\|_2$ is at times inversely-proportional
to $\zeta_k^C$ when $\zeta_k^C$ is positive; in other words, a very large and positive $\zeta_k^C$ results in a small $\beta = 1 / \zeta_k $ when $\|g_\perp\|_2$ is not too relatively large.

\medskip

Having obtained optimal $v_\parallel$ and $v_\perp$, $s^*$ can be recovered using the relationship $Pv=s$ and noting
that $P_\perp P_\perp^T = (I-P_\parallel P_\parallel)$
to give
\begin{eqnarray}\label{eqn-sstar}
s^*& = & P[v_\parallel^*+v_\perp^*]\nonumber \\
& = & 
P_\parallel v_\parallel^*+
P_\perp v_\perp.
\end{eqnarray}
To compute $P_\perp v_\perp$,
we use the same strategy as in~\cite{Oleg2021}; that is,
picking $u$ to be 
$u=\frac{P_\perp^Te_i}{\|P_\perp^Te_i\|_2},$ where $i$ is the first
index such that $\|P_\perp^Te_i\|\ne 0$, then 
\begin{equation}\label{eqn-sstar2}
s^* = P_\parallel(v_\parallel^*-
P_\parallel^T w^*) + w^*,
\end{equation}
where $$w^* =
\left\{\begin{array}{ll}
-\frac{1}{\zeta_k^C}g & \text{if } \zeta_k^C>0
\text{ and } \|g_\perp\|_2\le \delta_k |\zeta_k^C|\\ [0.2cm]
\frac{\delta_k}{
\|P_\perp^Te_i\|_2}e_i & \text{if }
\zeta_k^C\le 0 \text{ and } 
\|g_\perp\|_2=0\\[.2cm]
-\frac{\delta_k}{\|g_\perp\|_2}g & \text{otherwise.}\end{array}
\right.$$

Note that the quantities $\|g_\perp\|_2$ and $\|P_\perp^Te_i\|_2$
can be computed using following
relationships:
$$\|g_\perp\|_2^2
+\|g_\parallel\|_2^2=\|g\|_2^2 \quad\text{ and } \quad \|P_\perp^Te_i\|_2^2+
\|P_\parallel^Te_i\|_2^2=1.$$
Thus, the solution $s^*$ for the $(P,\infty)$ trust-region subproblem can be computed using only $P_\parallel$ via  (\ref{eqn-Parallel}).

\subsubsection{The $(P,2)$ shape-changing norm}
In this section, we consider a trust-region subproblem whose constraint is defined by the $(P,2)$ norm:
\begin{equation}\label{eqn-shape-tr2}
\min_{s\in\Re^n} Q(s)=g_k^Ts+\frac{1}{2}s^TB_ks
\quad \text{subject to } \quad \|s\|_{P,2}\le \delta,
\end{equation}
Different from the $(P,\infty)$-norm, the subproblem does not have a closed-form solution; however, it can be decoupled into two subproblems--one that has a closed-form solution and one that is a low-dimensional two-norm subproblem that is easily solved.  To see this, consider the same approach as in the $(P,\infty)$-norm case.  Applying the same change of variables $v=P^Ts$, yields (\ref{eqn-cov1}) as before.  The problem is separable and decouples into the following trust-region subproblems:

\begin{eqnarray}
\min_{v_\parallel\in\Re^{2l}} q_\parallel(v_\parallel)=g_\parallel^Tv_\parallel+\frac{1}{2}\left(v_\parallel^T\left(\hat{\Lambda}+\zeta_kI_{2l}\right)v_\parallel\right)\quad \text{subject to } \quad \|v_\parallel\|_{2}\le \delta, \label{eqn-shape-tr1-sep2}\\
\min_{v_\perp\in\Re^{n-2l}}q_\perp(v_\perp)=g_\perp^Tv_\perp+\frac{1}{2}\zeta^C_k\|v_\perp\|_2^2
\quad \text{subject to } \quad \|v_\perp\|_{2}\le \delta.
\label{eqn-shape-tr2-sep2}
\end{eqnarray}
Since (\ref{eqn-shape-tr2-sep2}) is identical to (\ref{eqn-shape-tr2-sep}), its closed-form solution is given in Section~\ref{sec-Pinfty}.
Subproblem (\ref{eqn-shape-tr1-sep2}) is a low-dimensional problem since $l$ is typically chosen to be a small number (e.g., less than 10).  Moreover, $\nabla^2 q_\parallel (v_\parallel)$ is a diagonal matrix.  For this reason, any standard trust-region method (including direct methods) may be used to solve this subproblem (e.g., see~\cite{conn2000trust} for possible methods).  However, in this work,
we propose using the method found in~\cite{brust2017solving}.

\medskip

The {\small OBS} method found in~\cite{brust2017solving} is an ``exact" subproblem solver when {\small L-SR1} matrices are used as the approximate Hessian.
The method computes solutions to satisfy
optimality conditions given in
Theorem~\ref{thrm-GMS} by exploiting the compact formulation of {\small L-SR1} matrices.  For this work, we use a modified version of the {\small OBS} method that makes use of the compact formulation for {\small MSS} matrices.
Specifically, given an {\small MSS} matrix and its compact formulation
 (Section~\ref{subsec-compact2}),
the partial spectral decomposition
$B_k=P\Lambda P^T$
can be computed as in Section~\ref{eqn-dense-spectral}, where
$$P= \begin{bmatrix} QU & (QU)^\perp\end{bmatrix} \quad \text{and} \quad
\Lambda = \begin{bmatrix} \hat{\Lambda}+\zeta_k I & 0 \\ 0 & \zeta_k^C I \end{bmatrix}.
$$
The optimality conditions given
by Theorem~\ref{thrm-GMS} for the $(P,2)$ subproblem are as follows:
\begin{eqnarray}\label{eqn-parallel-case1}
\left(\hat{\Lambda}+(\sigma^* +\zeta_k ) I\right)v_\parallel^*&=&-g_\parallel,\\
\sigma^*\left(\|v_\parallel^*\|_2-\delta\right) & = & 0\\
\|v_\parallel\|_2 &\le& \delta,\\
\sigma^* & \ge & 0,\\
\hat{\lambda}_i+(\sigma^*+\zeta_k) &\ge& 0 \text{ for } 1\le i \le 2l.
\label{eqn-parallel-case5}\end{eqnarray}
Let $\hat{\lambda}_{2l}$ denote
the smallest entry in $\hat{\Lambda}$.  A solution of (\ref{eqn-parallel-case1})--(\ref{eqn-parallel-case5}) 
can be computed by considering three general cases
that depend
on the sign of $\hat{\lambda}_{2l}+\zeta_k$.
Details for each case
is given in~\cite{brust2017solving,Oleg2021}.

\medskip

Having obtained $v_\perp^*$ and $v_\parallel^*$, the solution to the $(P,2)$-norm shape-changing subproblem is computed using (\ref{eqn-sstar2}).  As with the $(P,\infty)$-norm case, matrix-vector products with $P_\parallel$ do
not require forming $P_\parallel$ explicitly.

\section{Numerical results}
 In this section, we report results of various experiments using the limited-memory {\small MSS}  method ({\small L-MSSM}) and other limited-memory quasi-Newton methods.  For these results,
 we used 60 problems from the {\small CUTE}st test set~\cite{cutest} with $n\ge 1000$.  Specifically, all problems with the classification ``OUR2" with $n\ge 1000$ were chosen\footnote{See \url{https://www.cuter.rl.ac.uk/Problems/mastsif.shtml} for further classification information.},
which includes all problems with an objective function that is nonconstant, nonlinear, nonquadratic, and not the sum of squares.  The 60 problems were: {\small ARWHEAD, BOX, BOXPOWER, BROYDN7D, COSINE, CRAGGLVY, CURLY10, CURLY20, CURLY30, DIXMAANA, DIXMAANB, DIXMAANC, DIXMAAND, DIXMAANE, DIXMAANF,
DIXMAANG, DIXMAANH, DIXMAANI, DIXMAANJ, DIXMAANK, DIXMAANL,
DIXMAANM, DIXMAANN, DIXMAANO, DIXMAANP, DQRTIC, EDENSCH, EG2, ENGVAL1, FLETBV3M, FLETCBV2, FLETCBV3, FLETCHBV, FLETCHCR, FMINSRF2, FMINSURF, GENHUMPS, INDEF, INDEFM, JIMACK, NCB20, NCB20B, NONCVXU2,}

\small{ NONCVXUN,
NONDQUAR, POWELLSG, POWER, QUARTC, SCHMVETT, SCOSINE, SCURLY10, SCURLY20, SCURLY30, SENSORS, SINQUAD, SPARSINE, SPARSQUR, SSCOSINE, TOINTGSS,} and {\small VAREIGVL}.

In our comparisons we use the following five algorithms to solve the trust-region subproblems 
with various choices for the approximate Hessian:
\begin{center}
\begin{tabular}{l|l}
Abbreviation & Description\\
\hline
SC-INF 	& the $(P,\infty)$-norm subproblem solver
with $B_0=\gamma_k I$\\
SC-INF-D 	& the $(P,\infty)$-norm subproblem solver with a dense initialization\\
SC-L2 	& the $(P,2)$-norm subproblem solver with
$B_0=\gamma_k I$\\
SC-L2-D 	& the $(P,2)$-norm subproblem solver with a dense initialization
\\ %$\ell_2$-norm$  % OBS
trCG 		& truncated CG \cite[Algorithm 7.5.1]{conn2000trust} \\
\end{tabular}
\end{center}
For these experiments, tr{\small CG} was implemented in {\small MATLAB} by the authors.
%, while L2 is a modification of %\texttt{obs.m}
%for the {\small L-MSS} matrices, %publicly available at %\url{https://github.com/johannesbrus%t/OBS}). 
%In the case when the dense initialization is
%used with the shape-changing norm solvers,
%the methods are 
%labeled as {\small SC-INF-D} and {\small SC-L2-D}, %respectively. 
The {\small L-MSS} shape-changing subproblem solvers were implemented in an algorithm similar to
\cite[Algorithm 5]{Oleg2021}. 
A feature
of this algorithm is that the {\small L-MSS} matrix is updated by every pair $\{({s}_i,{y}_i)\}^k_{{i=k-m+1}} $
as long as the $ s_i $ are linearly independent
(updates are skipped if this condition is not met).
In the case of a {\small MSS} {\small L2} method, previous
numerical results found that $m=3$ outperformed larger memory choices of $m=5$ and $m=7$~\cite{mss-erway,brust2017solving}. Our own experiments for this paper confirmed these results.  For this reason, $m=3$ is used for all experiments with {\small MSS} matrices.

Comparisons on the test set are made using
extended performance profiles as in \cite{MahajanLeyfferKirches11}. These profiles are an extension
of the well-known profiles of Dolan and Mor\'{e} \cite{DolanMore02}. 
 We compare total computational time (and function calls) for each solver on the test set of problems.
 The performance metric $ \rho_s(\tau) $ with a given number of test problems $ n_p $ is
\begin{equation*}
		\rho_s(\tau) = \text{card}\left\{ p : \pi_{p,s} \le \tau \right\} \big/ n_p \quad \text{and} \quad \pi_{p,s} = t_{p,s}\big/ \underset{1\le i \le S, i \ne s}{\text{ min } t_{p,i}},
\end{equation*} 
where $ t_{p,s}$ is the ``output'' (i.e., time) of
``solver'' $s$ on problem $p$. Here $ S $ denotes the total number of solvers for a given comparison. This metric measures
the proportion of how close a given solver is to the best result. The extended performance profiles are the same as the classical ones for $ \tau \ge 1 $.
(In the profiles we include a dashed vertical grey line, to indicate $ \tau = 1 $.) 
The solvers are compared on 
60  large-scale {\small CUTE}st problems. We consider \eqref{eqn-min} to be solved when $\| \nabla f(x_k) \|_\infty < \varepsilon$ with $\varepsilon=5.0\times 10^{-4}$.  In all the performance profiles, $\rho_s(\tau)<1$, which
indicates that no solver was able to solve all the problems in the test set; however,
the value of $\rho_s(\tau)$ at $\tau=32$ indicates the percentage of problems solved in the test set.

\medskip
For the single-parameter initialization, we use the representation
$B_0=\gamma_k I$, 
where 
\begin{equation}\label{eqn-single}
\gamma_k=\underset{k-1\le i \le k-q}{\text{max}}\left\{\frac{y_i^Ty_i}{y_i^Ts_i}\right\},
\end{equation}
and $q$ is the number of stored iterates used to compute $\gamma_k$. This initialization is based on work in~\cite{Oleg2021} that showed that this initialization and the value of $q=5$ works well for 
single-parameter initializations using {\small SC-INF},
{\small SC-L2}, {\small L2}, and tr{\small CG} in the case of limited-memory Symmetric Rank-1 ({\small L-SR1}) Hessian approximations.
For the dense initialization,  $$\zeta_k =\underset{k-1\le i \le k-q}{\text{max}}\left\{\frac{y_i^Ty_i}{y_i^Ts_i}\right\} \quad \text{and} \quad \zeta_k^C=\frac{y_k^Ty_k}{y_k^Ts_k},$$
with $q=5$, i.e., $\zeta_k$ was chosen to be
the single-parameter initialization and $\zeta_k^C$
was chosen to be the well-known initialization for
quasi-Newton methods~\cite{bb88}.
In~\cite{mss-erway}, a variation of this initialization was found to outperform other 
choices for 
$\zeta_k$ and $\zeta_k^C$ in the case of {\small MSS} matrices.  Note that $q$ is the number of 
of stored updates to form the initialization parameters
and not the ``memory'' given by $m$, i.e., the number of stored updates to form $B_k$.

\subsection{Initialization experiments}
In this section, we compare the performance of the dense initialization with the single parameter initialization using
the two shape-changing norms for the trust-region subproblems.  In this set of experiments, only {\small L-MSSM} %{\small MSS} 
matrices were used to approximate the Hessian, and a maximum of $5,000$ iterations were allowed. 
We begin with the following two experiments:
\begin{itemize}
\item Experiment I.A: Comparison between the single parameter initialization 
({\small L-MSSM-SC-L2})
with the dense initialization
({\small L-MSSM-SC-L2-D})
using the shape-changing $(P,2)$ norm.
\item 
Experiment I.B: Comparison between the single parameter 
initialization
({\small L-MSSM-SC-INF})
with the dense initialization
({\small L-MSSM-SC-INF-D})
using the shape-changing $(P,\infty)$ norm.
\end{itemize}
Fig.~\ref{fig:comp_init_2}
shows that {\small L-MSSM-SC-L2} outperforms 
{\small L-MSSM-SC-L2-D} in computational time,
while Fig.~\ref{fig:comp_init_infty}
shows that {\small L-MSSM-SC-INF-D} outperforms 
{\small L-MSSM-SC-INF} in both computational time and function calls.  These results lead us to the following third experiment, which compares the better method from the previous two experiments:
\begin{itemize}
\item Experiment I.C: Comparison between the single parameter 
initialization solver using the shape-changing $(P,2)$ norm
({\small L-MSSM-SC-L2})
with the dense initialization solver using the shape-changing
$(P,\infty)$ norm
({\small L-MSSM-SC-INF-D}).
\end{itemize}
Figure~\ref{fig:2_infty_compare} reports the results
of Experiment I.C, where
{\small L-MSSM-SC-INF-D}
appears to do better on the test set in terms of time and function evaluations.  Moreover, this method solves more problems on the test set than 
{\small L-MSSM-SC-L2}.

% Comparison of best outcomes
% With dense initialization
\begin{figure}[t]
    %\centering
    \begin{minipage}{0.48\textwidth}
		\includegraphics[trim=0 0 20 0,clip,width=\textwidth]{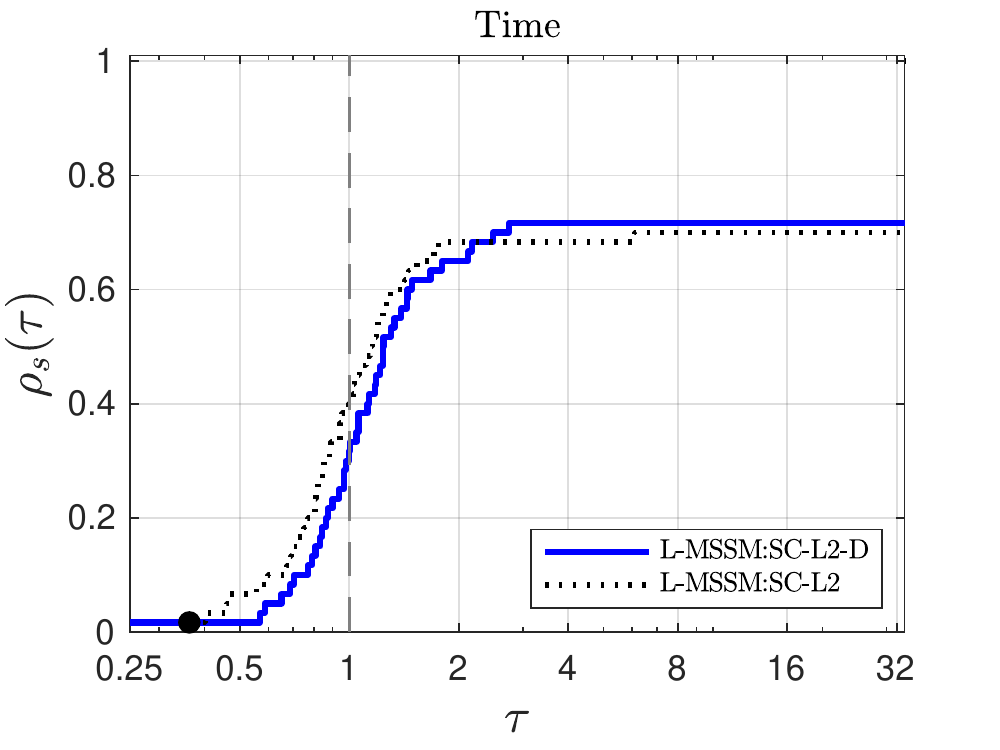}
	\end{minipage}
		\hfill
	\begin{minipage}{0.48\textwidth}
		\includegraphics[trim=0 0 20 0,clip,width=\textwidth]{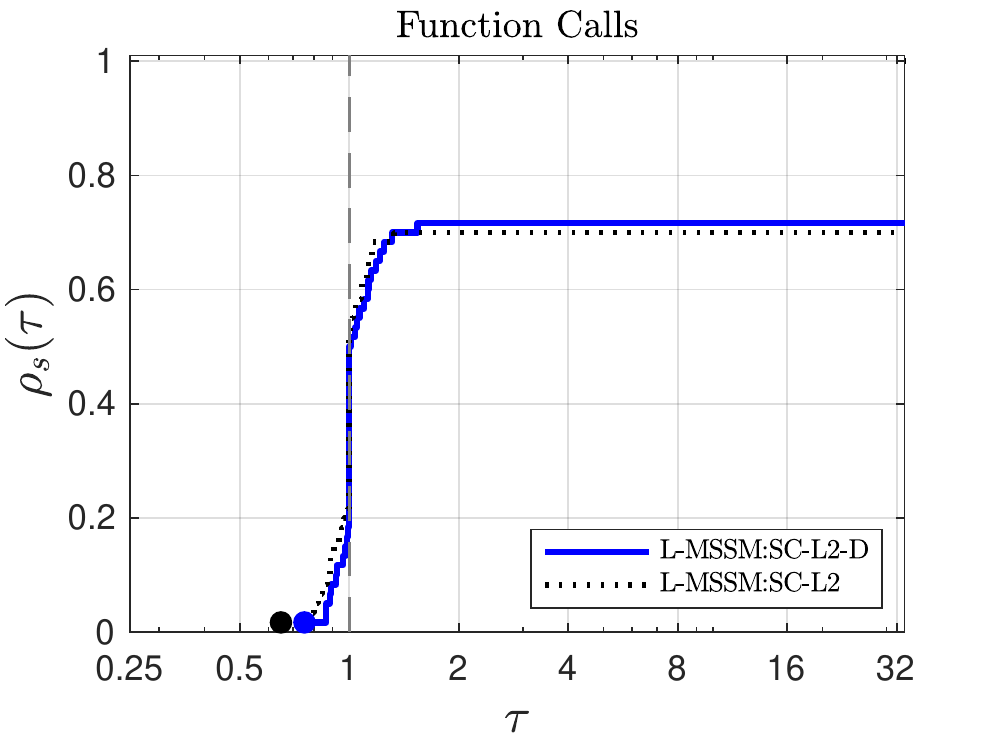}
	\end{minipage}
    \caption{Experiment I.A. Comparison on time and function calls with the shape-changing $(P,2)$ solver.}
    \label{fig:comp_init_2}
\end{figure}
\begin{figure}[t]
    %\centering
    \begin{minipage}{0.48\textwidth}
		\includegraphics[trim=0 0 20 0,clip,width=\textwidth]{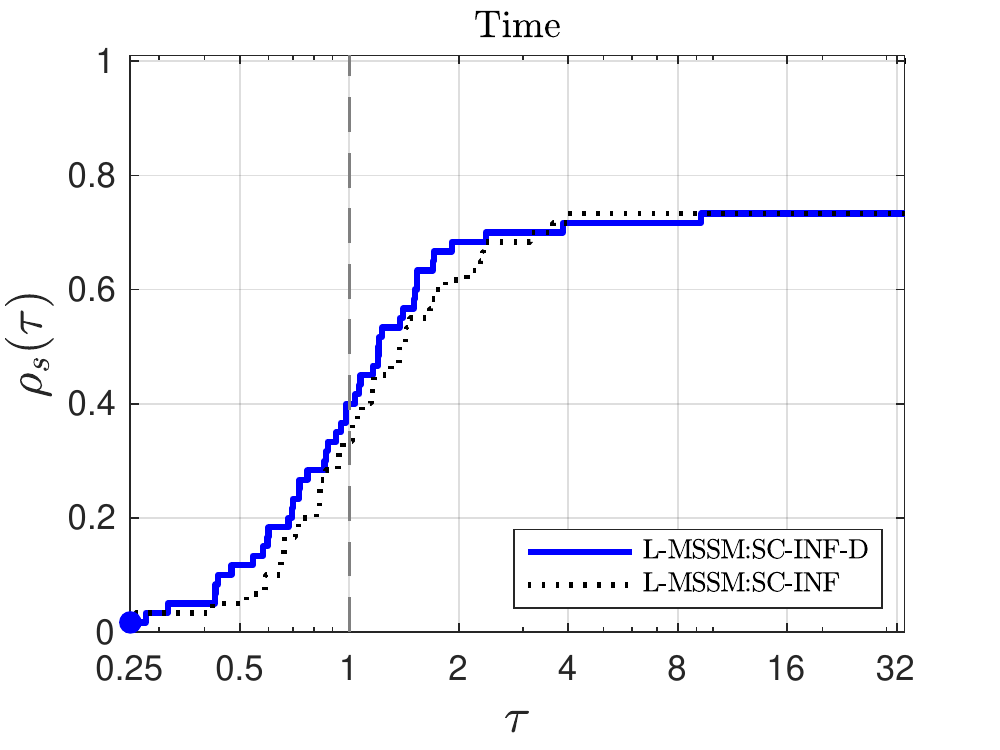}
	\end{minipage}
		\hfill
	\begin{minipage}{0.48\textwidth}
		\includegraphics[trim=0 0 20 0,clip,width=\textwidth]{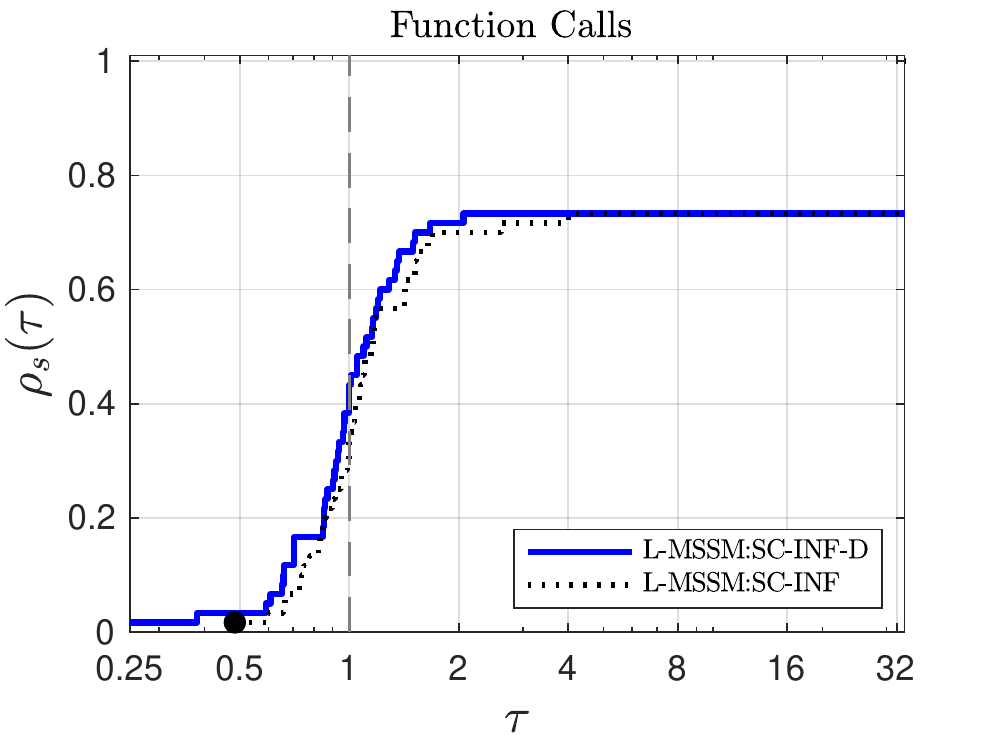}
	\end{minipage}
    \caption{{Experiment I.B. Comparison on time and function calls with the shape-changing $(P,\infty)$ solver.}}
    \label{fig:comp_init_infty}
\end{figure}
%the $(P,2)$-solver with the single parameter initialization.  
%As a result,
%for subsequent experiments, we only %compare the $(P,\infty)$ norm %together with the dense %initialization.

\begin{figure}[t]
    %\centering
    \begin{minipage}{0.48\textwidth}
		\includegraphics[trim=0 0 20 0,clip,width=\textwidth]{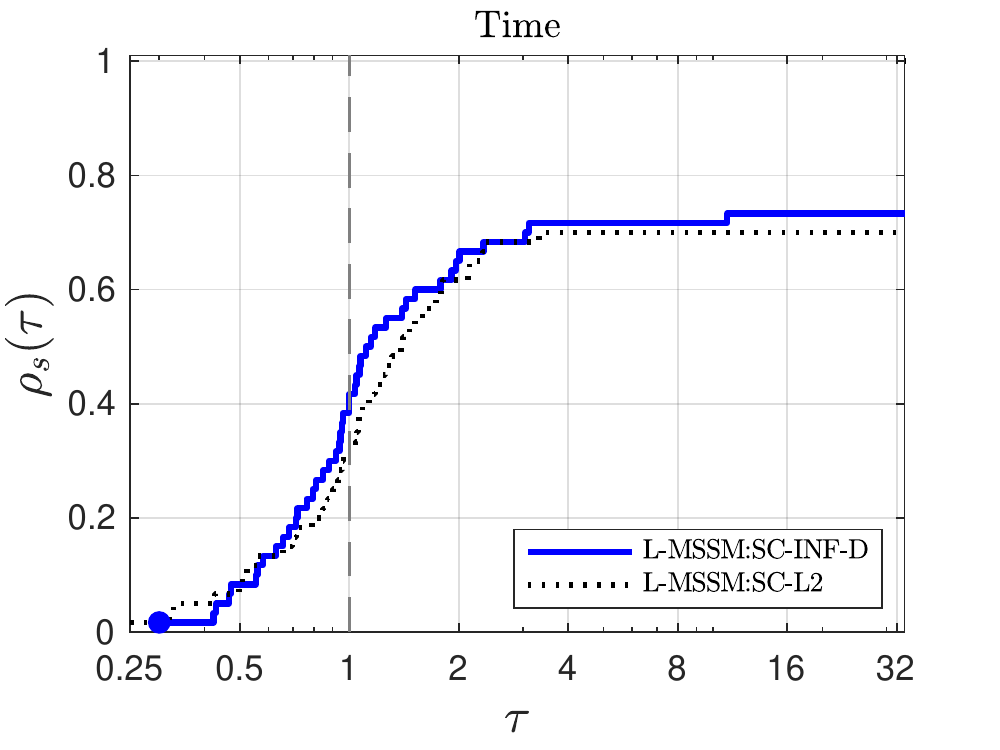}
	\end{minipage}
		\hfill
	\begin{minipage}{0.48\textwidth}
		\includegraphics[trim=0 0 20 0,clip,width=\textwidth]{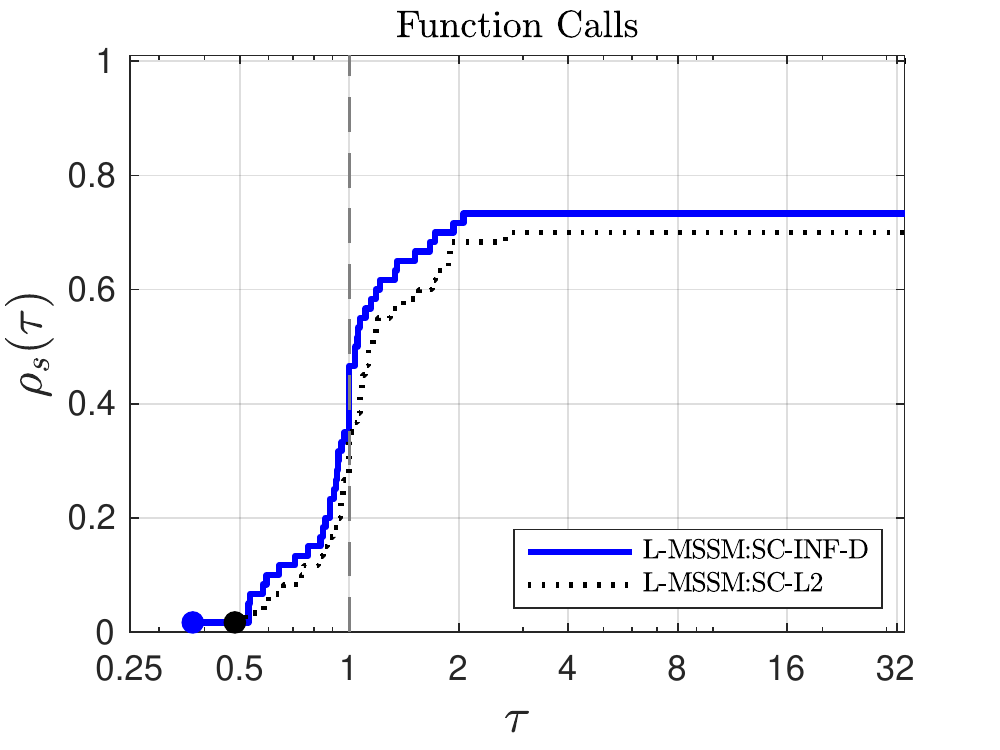}
	\end{minipage}
    \caption{{Experiment I.C. Comparison on time and function calls with the the shape-changing $(P,\infty)$ solver with the dense initialization and the $(P,2)$ solver with the single-parameter initialization.}}
    \label{fig:2_infty_compare}
\end{figure}

\subsection{Subproblem solvers}
In this section, we present Experiment II, which compares  
the best performing solver from Experiment~I 
({\small L-MSSM-SC-INF-D}) to
%$(P,\infty)$-norm solver with a dense initialization to
truncated CG (tr{\small CG}).  
%The purpose of this experiment is to see how the shape-changing
%norm subproblem solver for the $(P,\infty)$ performs relative to truncated {\small CG}.  To avoid confounding, 
To compare these approaches as trust-region subproblem solvers,
only {\small L-MSSM} approximations of the Hessian are used to approximate the Hessian for both methods.
The maximum number of allowed iterations in this experiment was $5,000$.  The results of this experiment are presented in Figure~\ref{fig:comp_subprob}.
In terms of time, 
%the $(P,\infty)$-norm solver with the dense initialization significantly 
{\small L-MSSM-SC-INF-D}
outperforms tr{\small CG}. This may be due to the fact that {\small CG} is an iterative method; in contrast, the $(P,\infty)$-norm solver  analytically computes the solution.
In terms of function evaluations, at $\tau=1$, the $(P,\infty)$-solver outperforms tr{\small CG}--indicating that on any given problem in the subset, the $(P,\infty)$-solver will require fewer function evaluations most of the time.  However, over the entire test set, tr{\small CG} performs slightly better in terms of function evaluations.

\begin{figure}[t]
    %\centering
    \begin{minipage}{0.48\textwidth}
		\includegraphics[trim=0 0 20 0,clip,width=\textwidth]{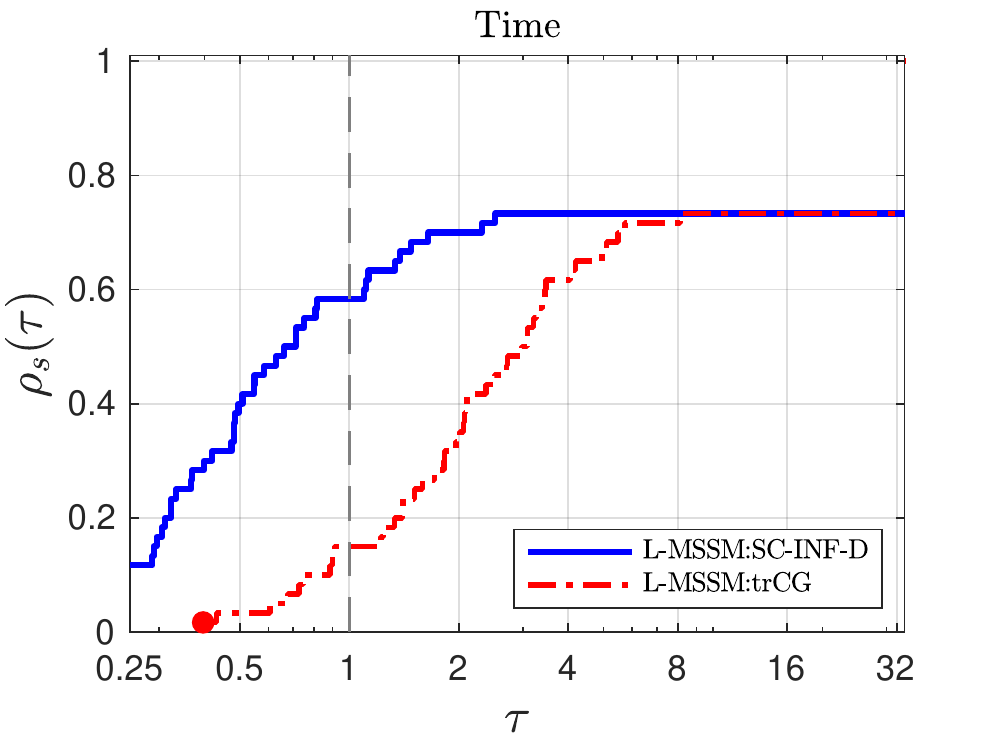}
	\end{minipage}
		\hfill
	\begin{minipage}{0.48\textwidth}
		\includegraphics[trim=0 0 20 0,clip,width=\textwidth]{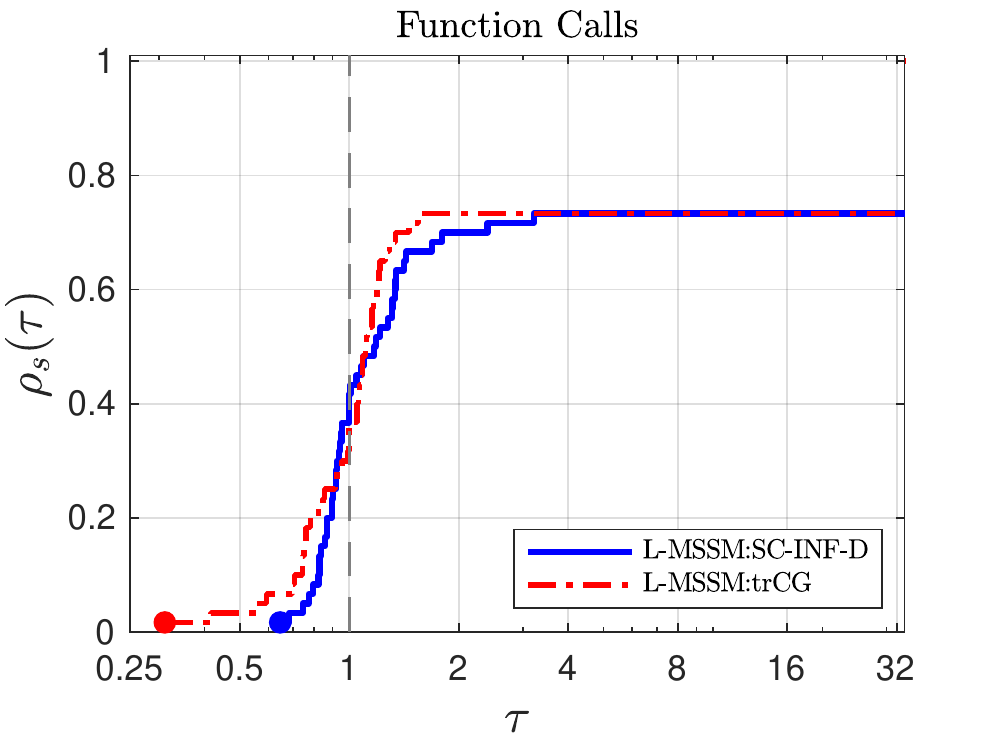}
	\end{minipage}
    \caption{Experiment II. Comparison of $(P,\infty)$ with the dense initialization to trCG subproblem solvers with L-MSSM matrices.}
    
    \label{fig:comp_subprob}
\end{figure}

\subsection{L-SR1 comparison}
In this section, we compare the performance of the shape-changing norms using {\small L-SR1} and {\small L-MSSM} approximations for the Hessian.  
For these experiments, the maximum number of iterations is $50,000$. 
The memory parameters for {\small L-SR1}
were chosen based on results in~\cite{Oleg2021}, where 
$m=5$ (the quasi-Newton method memory parameter) and $q=7$ (the number of stored iterates used to compute $B_0 = \gamma_k I$) appear  to be the
best combination.  
We present four experiments.

\begin{itemize}
\item Experiment III.A: Comparison between solvers using {\small L-SR1 } matrices with the single parameter initialization with 
{\small L-MSSM} matrices with the single parameter initialization using the shape-changing $(P,2)$ norm.
\item Experiment III.B: Comparison between solvers using {\small L-SR1 } matrices with the single parameter initialization with 
{\small L-MSSM} matrices with the dense parameter initialization using the shape-changing $(P,2)$ norm.
\item Experiment III.C: Comparison between solvers using {\small L-SR1 } matrices with the single parameter initialization with 
{\small L-MSSM} matrices with the single parameter initialization using the shape-changing $(P,\infty)$ norm.
\item Experiment III.D: Comparison between solvers using {\small L-SR1 } matrices with the single parameter initialization with 
{\small L-MSSM} matrices with the dense parameter initialization using the shape-changing $(P,\infty)$ norm.
\end{itemize}
In these experiments, we used the single parameter initialization for {\small L-SR1}.  For {\small L-MSSM}, we used both the optimal memory sizes of $m=3$ and $q=5$ as well as the same memory size used for {\small L-SR1} ($m=5$ and $q=7$). 
Figs.~\ref{fig:sr1-p2}-\ref{fig:sr1-pinfty-D} report the results for these four experiments.  In all cases, the solvers that use the {\small L-MSSM} matrices outperform those that use the {\small L-SR1} matrices, both in computational time and function evaluations.  In particular, based on the results of Figure~\ref{fig:2_infty_compare}, it is not surprising that the performance profile comparing {\small L-SR1} with the dense initialization {\small L-MSSM} (Fig.\ \ref{fig:sr1-pinfty-D}) is more striking.

%Figure~\ref{fig:sr1-p2} reports the results of using the $(P,2)$-norm solver with 
%{\small L-SR1} matrices  and {\small L-MSS} matrices.  For both memory size, {\small L-MSSM} outperforms {\small L-SR1}.  For this experiment, single-parameter initialization (\ref{eqn-single}) was used for both methods.  This experiment was repeated using the dense initialization; results are displayed in Figure~\ref{fig:sr1-p2-D}.  The densely-initialized {\small L-MSS} methods requires less computational time and fewer function evaluations than {\small L-SR1} on this test set.

\begin{figure}[t]
     \begin{minipage}{0.48\textwidth}
		\includegraphics[trim=0 0 20 0,clip,width=\textwidth]{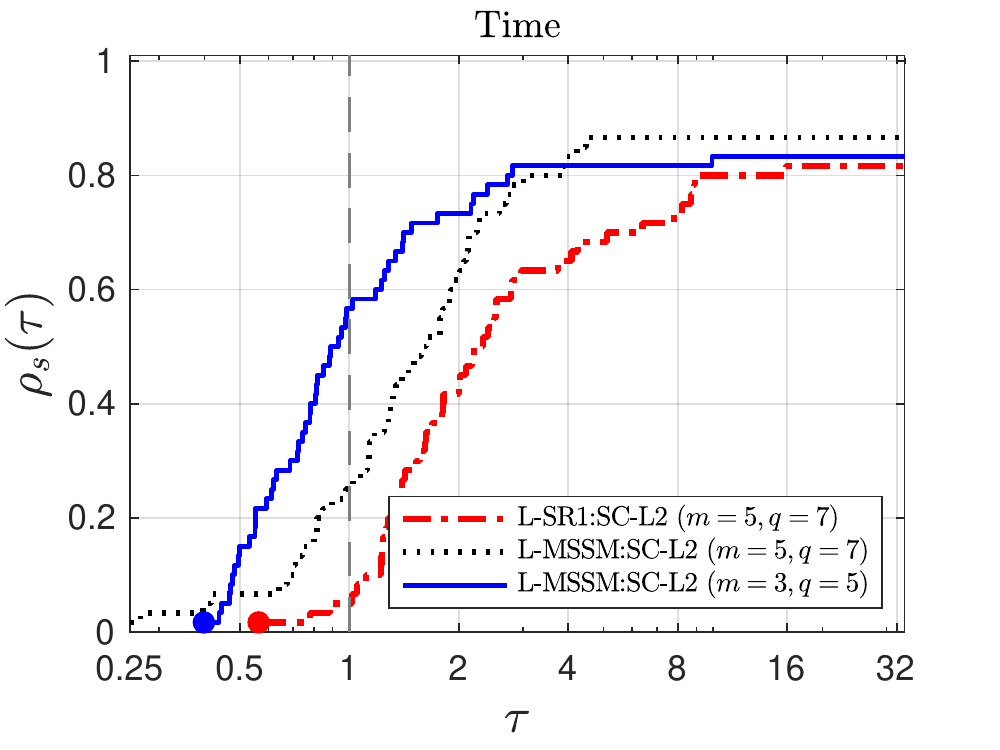}
	\end{minipage}
		\hfill
	\begin{minipage}{0.48\textwidth}
		\includegraphics[trim=0 0 20 0,clip,width=\textwidth]{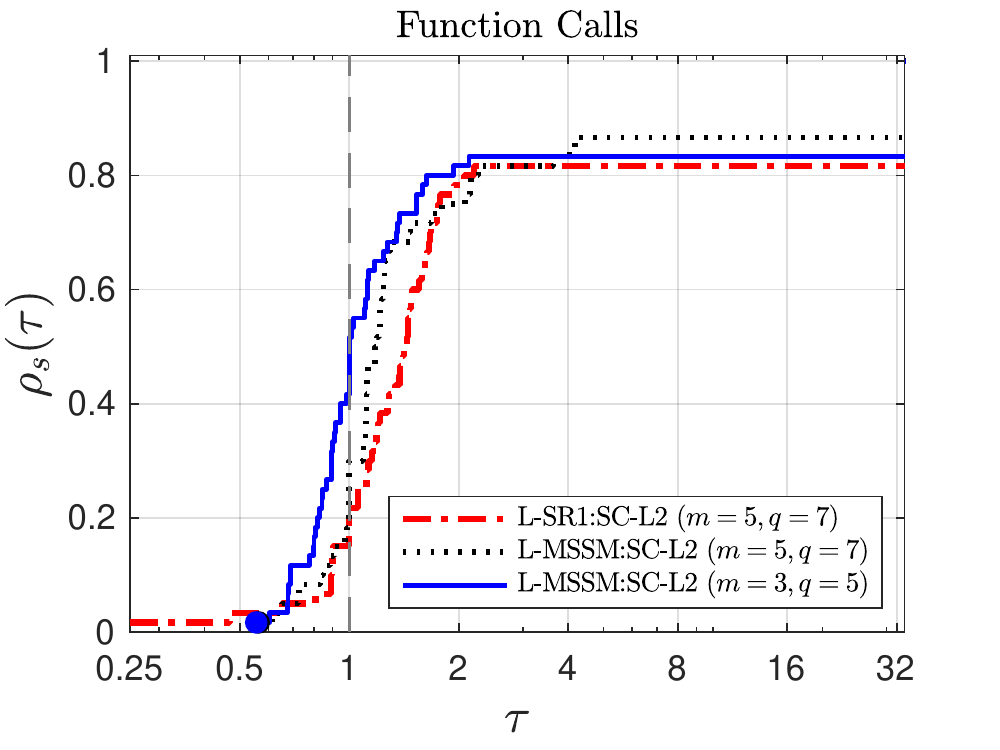}
	\end{minipage}
    \caption{Experiment III.A: Comparison between L-SR1 and L-MSSM matrices using the single-parameter initialization with the shape-changing $(P,2)$ norm.}
    
    \label{fig:sr1-p2}
\end{figure}

\begin{figure}[t]
     \begin{minipage}{0.48\textwidth}
		\includegraphics[trim=0 0 20 0,clip,width=\textwidth]{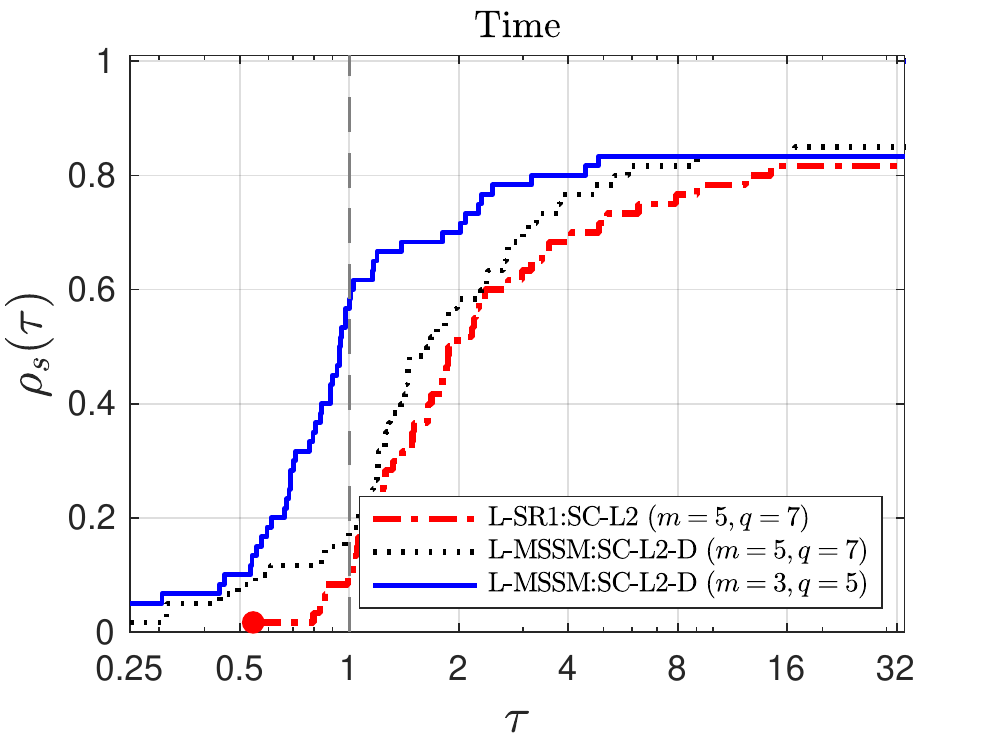}
	\end{minipage}
		\hfill
	\begin{minipage}{0.48\textwidth}
		\includegraphics[trim=0 0 20 0,clip,width=\textwidth]{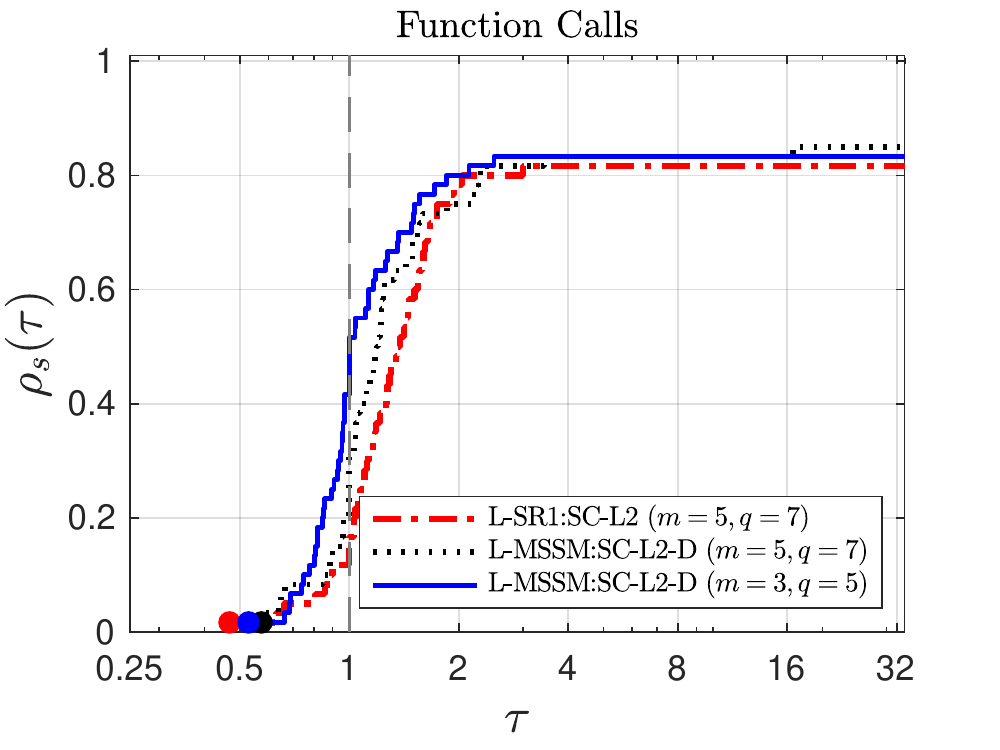}
	\end{minipage}
    \caption{Experiment III.B: Comparison between L-SR1 and L-MSSM matrices using the dense initialization with the shape-changing $(P,2)$ norm.}
    
    \label{fig:sr1-p2-D}
\end{figure}

%Finally, in the case of the $(P,\infty)$ solver, both the single-parameter initialized and densely-initialized {\small L-MSS} method were tested against the {\small L-SR1} method.
%Figure~\ref{fig:sr1-pinfty} and Figure~\ref{fig:sr1-pinfty-D} display these results.  In both cases, the {\small L-MSS} methods outperform {\small L-SR1}.  Based on the results of Figure~\ref{fig:2_infty_compare}, it is not surprising that the performance profile comparing {\small L-SR1} with the densely-initialized {\small L-MSS} is more striking.

\begin{figure}[t]
     \begin{minipage}{0.48\textwidth}
		\includegraphics[trim=0 0 20 0,clip,width=\textwidth]{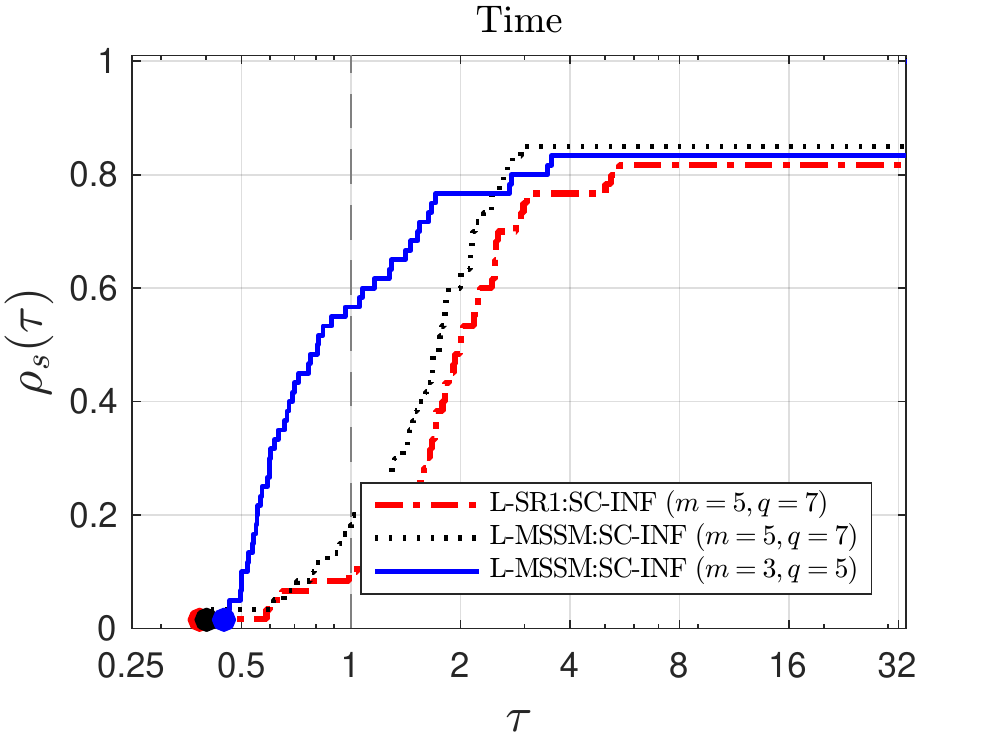}
	\end{minipage}
		\hfill
	\begin{minipage}{0.48\textwidth}
		\includegraphics[trim=0 0 20 0,clip,width=\textwidth]{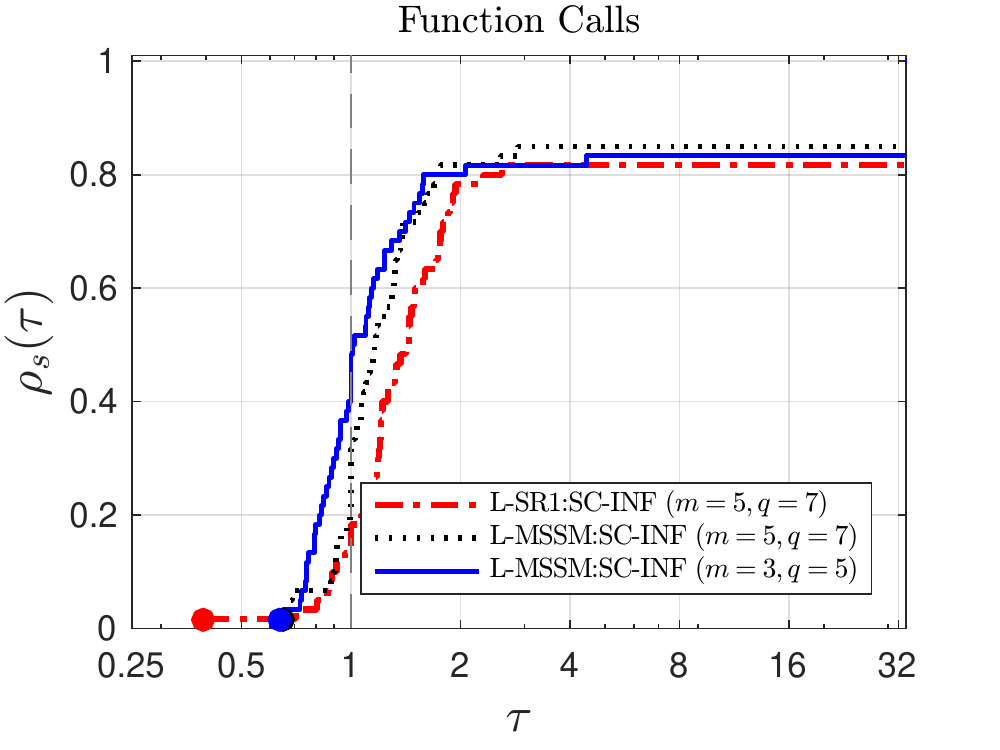}
	\end{minipage}
    \caption{Experiment III.C: Comparison between L-SR1 and L-MSSM matrices using the single-parameter initialization with the shape-changing $(P,\infty)$ norm.}
    
    \label{fig:sr1-pinfty}
\end{figure}

\begin{figure}[t]
     \begin{minipage}{0.48\textwidth}
		\includegraphics[trim=0 0 20 0,clip,width=\textwidth]{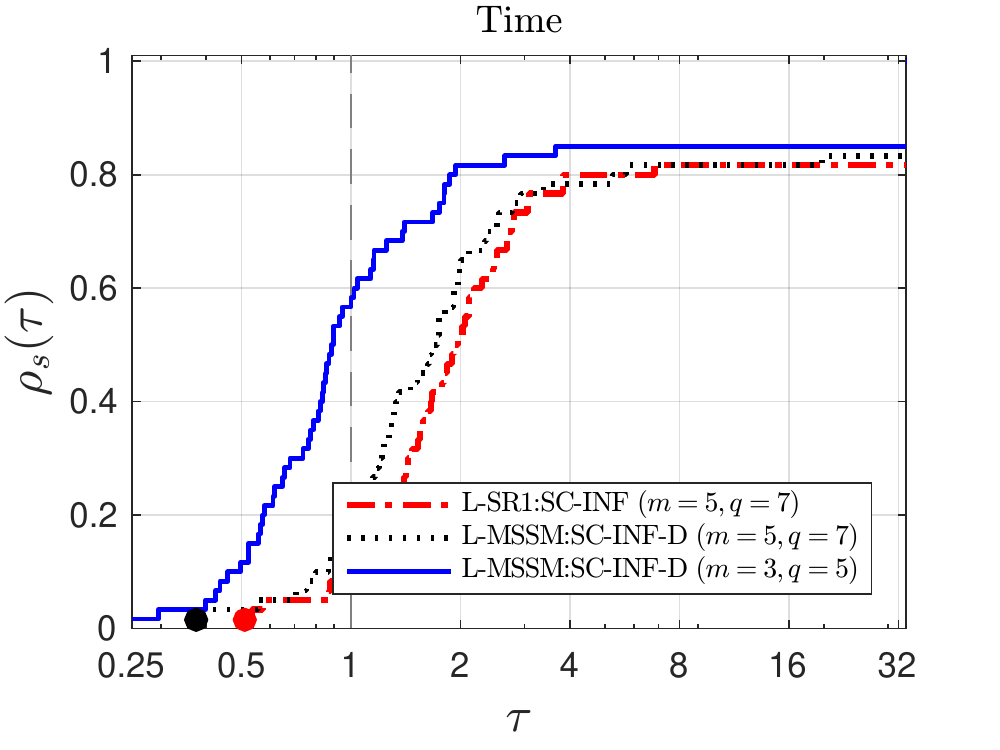}
	\end{minipage}
		\hfill
	\begin{minipage}{0.48\textwidth}
		\includegraphics[trim=0 0 20 0,clip,width=\textwidth]{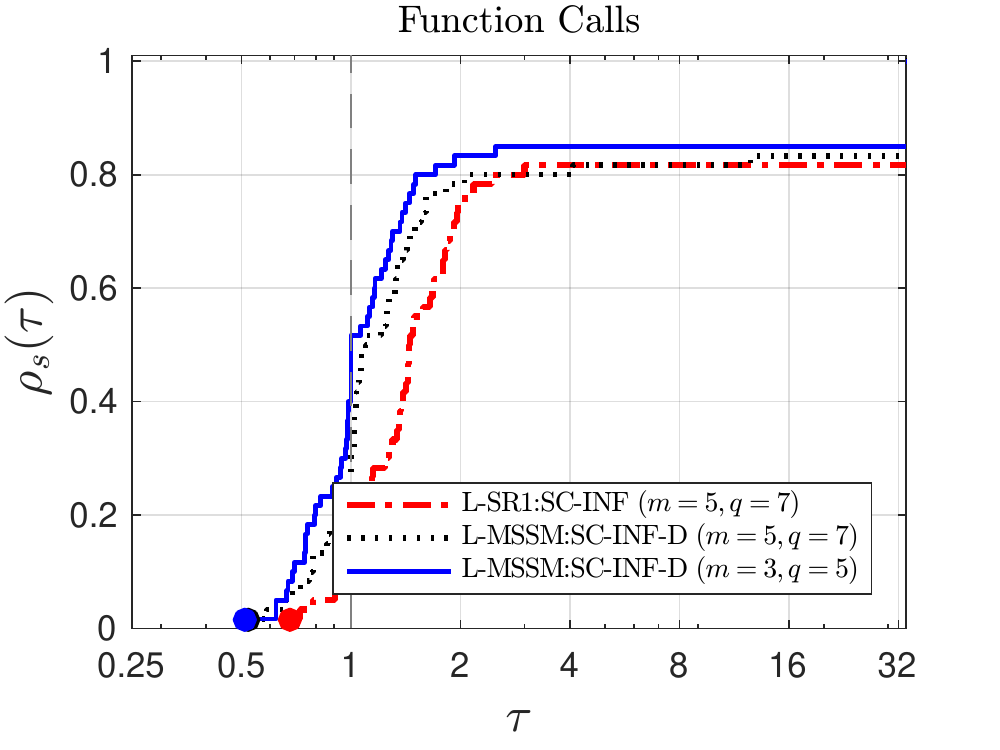}
	\end{minipage}
    \caption{Experiment III.D: Comparison between L-SR1 and L-MSSM matrices using the dense initialization with the shape-changing $(P,\infty)$ norm.}
    \label{fig:sr1-pinfty-D}
\end{figure}

\section{Concluding remarks}
In this paper, we proposed {\small L-MSS} methods
that make use of the dense initialization and two shape-changing
norms.
Numerical results suggest that methods using densely-initialized {\small MSS} 
matrix approximations of the Hessian together with the shape-changing 
norms outperform other trust-region methods. 
Based on the results in this paper,
we suggest default settings of $m=3$ and $q=5$ for
both {\small SC-L2} and {\small SC-INF} when using
either the dense or single-parameter initializations.

\section*{Acknowledgments}
This research work was partially funded by NSF Grant IIS-1741490.

%*************** EpilogBe *************************************
\newpage
\bibliographystyle{unsrt}
\bibliography{references}

\end{document}